\DeclareMathOperator{\Proj}{Proj}
\DeclareMathOperator{\sort}{sort}
\DeclareMathOperator{\area}{area}
\DeclareMathOperator{\cont}{cont}
\DeclareMathOperator{\dinv}{dinv}
\DeclareMathOperator{\bounce}{bounce}
\renewcommand{\sc}{\mathrm{sc}}
\newcommand{\Oo}{\mathcal O}
\newcommand{\N}{\mathbb N}
\newcommand{\R}{\mathbb R}
\newcommand{\Z}{\mathbb Z}
\newcommand{\C}{\mathbb C}
\renewcommand{\H}{\mathbf H}
\newtheorem{theorem}{Theorem}[section]
\newtheorem{lemma}[theorem]{Lemma}
\newtheorem{corollary}[theorem]{Corollary}
\newtheorem{proposition}[theorem]{Proposition}
\theoremstyle{definition}
\newtheorem{definition}[theorem]{Definition}
\newtheorem{example}[theorem]{Example}
\theoremstyle{remark}
\numberwithin{theorem}{section}
\title{$q,t$-Catalan Measures}
\author{Ian Cavey}
\begin{document}

\maketitle

\begin{abstract}
We introduce the $q,t$-Catalan measures, a sequence of piece-wise polynomial measures on $\R^2$. These measures are defined in terms of suitable area, dinv, and bounce statistics on continuous families of paths in the plane, and have many combinatorial similarities to the $q,t$-Catalan numbers. Our main result realizes the $q,t$-Catalan measures as a limit of higher $q,t$-Catalan numbers $C^{(m)}_n(q,t)$ as $m\to\infty$. We also give a geometric interpretation of the $q,t$-Catalan measures. They are the Duistermaat-Heckman measures of the punctual Hilbert schemes parametrizing subschemes of $\C^2$ supported at the origin. 
\end{abstract}

\section{Introduction}\label{sec:Intro}

Introduced by Garsia and Haiman \cite{GHai}, the $q,t$-Catalan numbers $C_n(q,t)\in \N[q,t]$ are a sequence of polynomials that refine the sequence of Catalan numbers. These polynomials have connections to many areas of math including combinatorics, representation theory, symmetric function theory, and algebraic geometry \cite{Hag}.  \\

In combinatorics, the $q,t$-Catalan numbers are defined as weighted sums over the set of Dyck paths. A \textit{Dyck path of height} $n$ is lattice path from $(0,0)$ to $(n,n)$ consisting of north and east steps, both of unit length, that never goes strictly below the diagonal $y=x$. The set of all Dyck paths of height $n$ is denoted $\mathcal D_n$, and this set is enumerated by the Catalan number $C_n = \frac{1}{n+1}{2n\choose n}$. In terms of the area, dinv, and bounce statistics on Dyck paths (see Section \ref{sec:DyckPaths}), the $q,t$-Catalan numbers are given by either of the following equivalent formulas,
\begin{align}
C_n(q,t) & = \sum_{D\in \mathcal D_n}q^{\dinv(D)}t^{\area(D)} \label{intro:def1} \\
& = \sum_{D\in \mathcal D_n}q^{\area(D)}t^{\bounce(D)}.\label{intro:def2} 
\end{align}

More generally, the higher $q,t$-Catalan numbers $C_n^{(m)}(q,t)\in \N[q,t]$, also introduced by Garsia and Haiman \cite{GHai}, refine the higher Catalan numbers $C_n^{(m)} = \frac{1}{mn+1}{(m+1)n \choose n}$ and specialize to the ordinary $q,t$-Catalan numbers in the case $m=1$. In this paper, we mainly work with the combinatorial higher $q,t$-Catalan numbers introduced by Loehr \cite{L}. These polynomials are defined by formulas analogous to (\ref{intro:def1}) and (\ref{intro:def2}), by recording generalized area, dinv, and bounce statistics on $m$-Dyck paths of height $n$. An $m$-Dyck path of height $n$ is a lattice path from $(0,0)$ to $(mn,n)$ made up of north and east steps, both of unit length, that never goes strictly below the diagonal $y=\frac1mx$. In Section \ref{sec:DyckPaths} we review the precise definitions of these polynomials and the statistics on $m$-Dyck paths used to define them.\\

The higher $q,t$-Catalan numbers satisfy the joint symmetry property, $C^{(m)}_n(q,t) = C^{(m)}_n(t,q)$, which is not apparent from the combinatorial definition. There are many alternate definitions of these polynomials (see the introduction of \cite{LLL}), many of which are visibly symmetric. It is difficult, however, to show that the plainly symmetric algebraic definitions agree with the combinatorial ones. This was done for all $n$ in the case $m=1$ by Garsia and Haglund \cite{GHag}. More recently, Mellit's proof \cite{M} of the ``compositional $(km,kn)$-shuffle conjecture" \cite{BGLX} implies in particular that the combinatorial and algebraic definitions of the higher $q,t$-Catalan numbers agree for all $m$ and $n$. To our current knowledge, it remains an open problem to prove this joint symmetry combinatorially, even in the case $m=1$.\\

In this paper, we introduce continuous analogues of the $q,t$-Catalan numbers defined by formulas analogous to (\ref{intro:def1}) and (\ref{intro:def2}). We define a \textit{continuous Dyck path of height} $n$ to be a path from $(0,0)$ to $(n,n)$ consisting of north steps of unit length and east steps of arbitrary positive length that never goes below the diagonal $y=x$ (see Figure \ref{fig:coords}). Denote the set of continuous Dyck paths of height $n$ by $\mathcal D^{(\cont)}_n$. In Section \ref{sec:ContDyckPaths} we introduce real-valued area, dinv, and bounce statistics on continuous Dyck paths based on the corresponding statistics for $m$-Dyck paths. \\

Since the set of continuous Dyck paths is infinite, one cannot naively sum over $\mathcal{D}^{(\cont)}_n$ as in formulas (\ref{intro:def1}) and (\ref{intro:def2}). Instead, we formulate a measure-theoretic analogue. The set of all continuous Dyck paths can be naturally considered as a full-dimensional polytope $\mathcal{D}^{(\cont)}_n\subseteq \R^{n-1}$, on which all of our statistics form piece-wise linear, continuous maps $\mathcal{D}^{(\cont)}_n\to \R$. The \textit{$q,t$-Catalan measure} $\mu_n$ is then defined as the pushforward of Lebesgue measure from the polytope $\mathcal{D}^{(\cont)}_n$ to $\R^2$ by either of the following maps,
\[\begin{tikzcd}
\mathcal{D}^{(\cont)}_n \arrow[rrr,shift right,"\area\times \bounce"']\arrow[rrr,shift left, "\dinv\times\area"]&&& \R^2.
\end{tikzcd}\]

In Sections \ref{sec:measures} and \ref{sec:T}, we show that these two definitions agree by constructing a measure preserving transformation $T:\mathcal{D}^{(\cont)}_n\to\mathcal{D}^{(\cont)}_n$ such that $(\dinv\times\area)\circ T = \area\times\bounce$. The measure preserving property is the analogue of bijectivity for the analogous maps on $m$-Dyck paths, and in fact $T$ is not injective. We show directly that the $q,t$-Catalan measures are compactly supported, piece-wise polynomial measures on $\R^2$. In Example \ref{ex:4measure}, we explicitly compute the $q,t$-Catalan measure in the case $n=4$.\\

Our main result realizes the $q,t$-Catalan measure as a limit of higher $q,t$-Catalan numbers. Before making this precise, let us first illustrate the limit in the case $n=4$. Figure \ref{fig:disc density} depicts the polynomials $C_4^{(m)}(q,t)$ for several values of $m$, where darker shading of the cell $(i,j)$ indicates a larger coefficient on the $q^it^j$ term in the indicated polynomial. As $m\to\infty$, these discrete density functions can be normalized to converge to the continuous density function of the $q,t$-Catalan measure $\mu_4$ (see Example \ref{ex:4measure}).\\

\begin{figure}[h]
\begin{tabular}{ccc}
\includegraphics[width=0.31\textwidth]{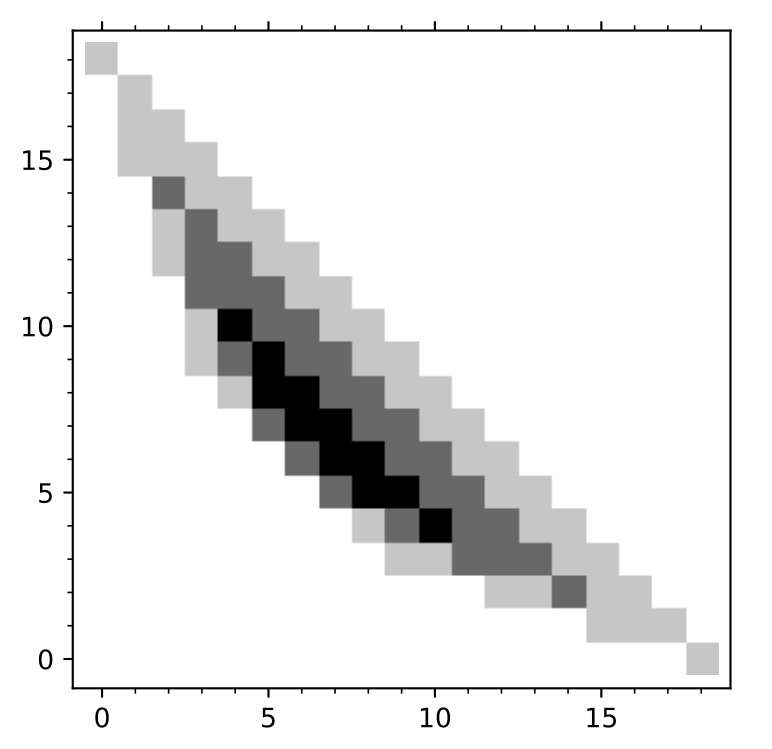}&
\includegraphics[width=0.3\textwidth]{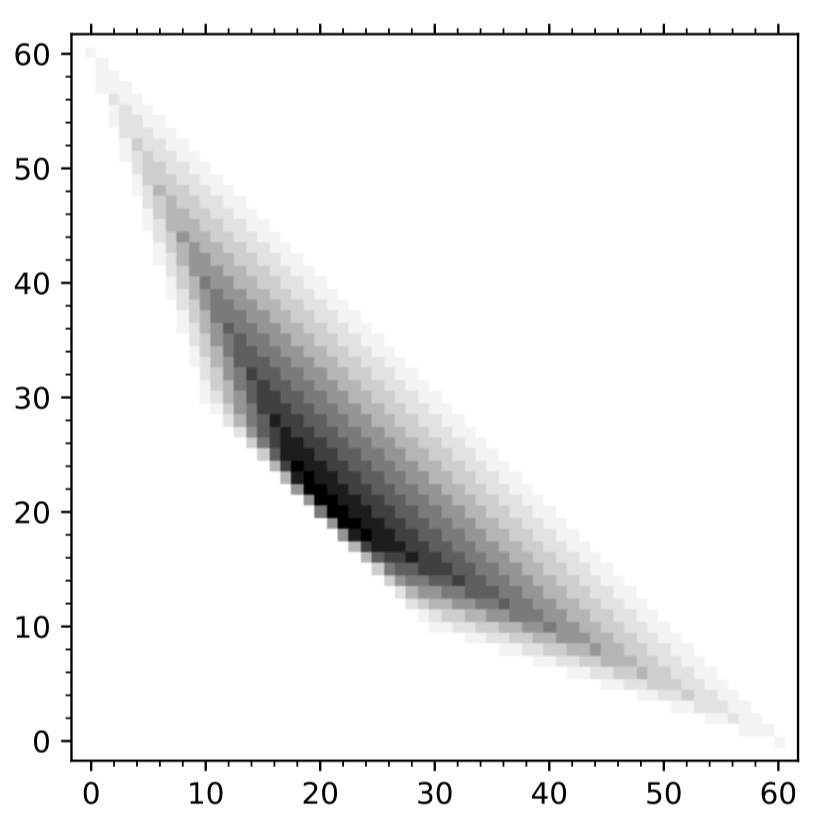}&
\includegraphics[width=0.31\textwidth]{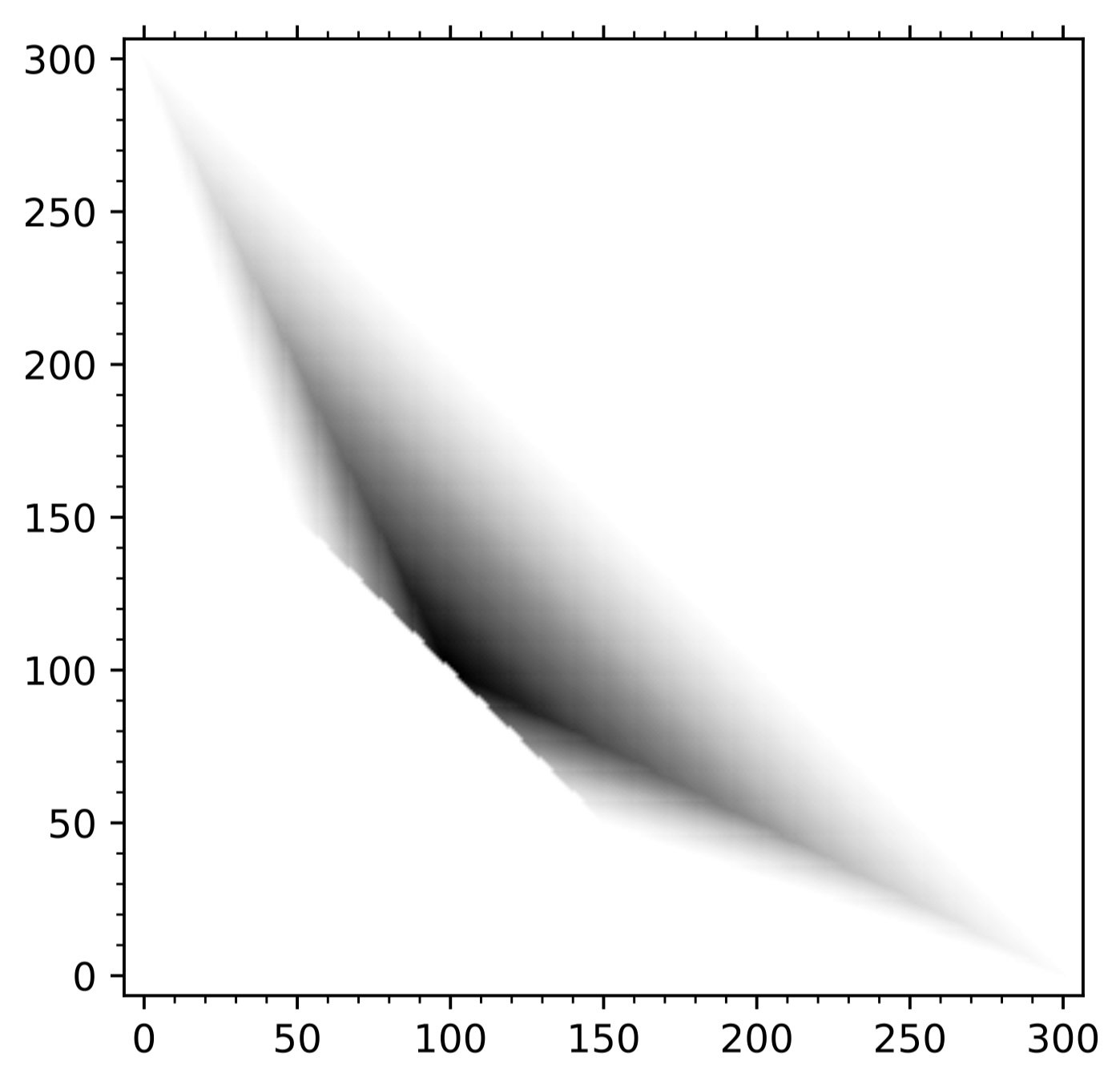} \\
$C_4^{(3)}(q,t)$ & $C_4^{(10)}(q,t)$ & $C_4^{(50)}(q,t)$\\
\end{tabular}
\caption{Discrete density functions of higher $q,t$-Catalan numbers}
    \label{fig:disc density}
\end{figure}

More precisely, encode $C^{(m)}_n(q,t)$ as a discrete measure on $\Z^2$ whose weight at $(i,j)$ is equal to the coefficient on the $q^it^j$ term of $C^{(m)}_n(q,t)$. Using the dinv, area formula for $C^{(m)}_n(q,t)$ and writing $\mathcal D^{(m)}_n$ for the set of $m$-Dyck paths of height $n$, this correspondence takes the form
\[ C^{(m)}_n(q,t) = \sum_{D\in \mathcal D^{(m)}_n}q^{\dinv(D)}t^{\area(D)}  \leftrightarrow \sum_{D\in \mathcal D^{(m)}_n}\delta_{(\dinv(D),\area(D))}, \]
where $\delta_{(a,b)}$ denotes a Dirac measure at the point $(a,b)$. We then normalize the measures by scaling their supports uniformly by a factor of $1/m$, and dividing the total weights by $m^{n-1}$. The following theorem, proved in Section \ref{sec:proof}, realizes the $q,t$-Catalan measures as a limit of these normalized discrete measures.

\begin{theorem}\label{introthm:limit}
For all $n\geq 1$, the $q,t$-Catalan measure $\mu_n$ is equal to the weak limit of measures on $\R^2$,
\[ \mu_n = \lim_{m\to\infty} \left(\frac{1}{m^{n-1}}\sum_{D\in \mathcal D^{(m)}_n}\delta_{\left(\frac{\dinv(D)}{m},\frac{\area(D)}{m}\right)}\right). \]
\end{theorem}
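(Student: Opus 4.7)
The plan is to realize the normalized sum on the right-hand side of Theorem \ref{introthm:limit} as a Riemann-sum approximation to the pushforward Lebesgue integral on $\mathcal{D}^{(\cont)}_n$ that defines $\mu_n$. To set this up, I would parametrize both $m$-Dyck and continuous Dyck paths uniformly by their east-step lengths $(\ell_1,\dots,\ell_n)$, where $\ell_i$ is the length (respectively, number) of east steps immediately following the $i$-th north step. In these coordinates, $\mathcal{D}^{(\cont)}_n\subset\R^{n-1}$ is cut out by $\ell_i\geq 0$, $\sum_i\ell_i=n$, and $\ell_1+\cdots+\ell_k\leq k$ for $k<n$, while an $m$-Dyck path of height $n$ corresponds to such a tuple with all coordinates in $\tfrac{1}{m}\Z$. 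This yields an embedding $\Phi_m:\mathcal{D}^{(m)}_n\hookrightarrow\mathcal{D}^{(\cont)}_n$ whose image is a grid of mesh $1/m$ in which each lattice point represents a fundamental domain of volume $1/m^{n-1}$.

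Next I would compare the statistics under $\Phi_m$. The continuous area and dinv from Section \ref{sec:ContDyckPaths} are designed as the natural continuous analogues of the $m$-Dyck statistics, so one should be able to establish uniform estimates
\[\left|\area^{(\cont)}(\Phi_m(D))-\tfrac{\area(D)}{m}\right|+\left|\dinv^{(\cont)}(\Phi_m(D))-\tfrac{\dinv(D)}{m}\right| = O(1/m)\]
with constants independent of $D\in\mathcal{D}^{(m)}_n$. The area bound is essentially immediate: $\area(D)/m$ is a standard Riemann sum for the area under the corresponding continuous path. The dinv bound is more delicate, since dinv is a sum over pairs of cells satisfying arm/leg inequalities, and one must match this count with the double integral defining $\dinv^{(\cont)}$ up to boundary contributions of order $1/m$.

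With these approximations in hand, the theorem reduces to Riemann-sum convergence. For any $f\in C_c(\R^2)$, uniform continuity of $f$ together with the above estimates give
\[\frac{1}{m^{n-1}}\sum_{D\in\mathcal{D}^{(m)}_n} f\!\left(\tfrac{\dinv(D)}{m},\tfrac{\area(D)}{m}\right) = \sum_{D\in\mathcal{D}^{(m)}_n}\tfrac{1}{m^{n-1}}f\!\left(\dinv^{(\cont)}(\Phi_m(D)),\area^{(\cont)}(\Phi_m(D))\right)+o(1),\]
and the remaining sum is a Riemann sum of mesh $1/m$ for $\int_{\mathcal{D}^{(\cont)}_n}f\circ(\dinv^{(\cont)}\times\area^{(\cont)})\,d\mathrm{Leb}$, which converges to that integral; lattice points within distance $1/m$ of the boundary of the polytope contribute only $O(m^{n-2})$ terms and vanish after normalization. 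The principal obstacle is the dinv estimate above: area is visibly a Riemann-sum discretization, but dinv has a richer combinatorial definition via arm-leg inequalities, and uniformly bounding the discrepancy between the discrete dinv and its continuous counterpart is the technical heart of the argument; everything else is standard Riemann-sum convergence on a compact polytope.
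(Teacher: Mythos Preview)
Your approach is essentially identical to the paper's: both embed $\mathcal{D}^{(m)}_n$ into $\mathcal{D}^{(\cont)}_n$ as the $\tfrac{1}{m}$-integral points of an $(n{-}1)$-dimensional polytope, establish uniform convergence of the normalized $m$-statistics to their continuous counterparts (the paper's Lemmas~\ref{lemma:area} and~\ref{lemma:dinv}), and then reduce to Riemann-sum convergence against a uniformly continuous test function. Two small corrections worth noting: in the paper's conventions the continuous area equals the normalized $m$-area \emph{exactly} (not merely up to $O(1/m)$), and the continuous dinv is a finite sum $\sum_{i<j}\sc(a_i-a_j)$ over $\binom{n}{2}$ row pairs rather than a double integral, so your ``principal obstacle'' becomes a direct termwise comparison yielding a uniform bound of $\binom{n}{2}/m$.
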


The proof of the theorem is based on a simple bijection between $m$-Dyck paths and those continuous Dyck paths whose horizontal step lengths all lie in $\frac1m\Z$: scale the $m$-Dyck path horizontally by a factor of $\frac1m$ so that it goes from $(0,0)$ to $(n,n)$ (see Figure \ref{fig:scaling}). The area, dinv, and bounce statistics on continuous Dyck paths were designed to agree with the corresponding (normalized) statistics on $m$-Dyck paths in the limit $m\to\infty$, from which we deduce the result. \\ 

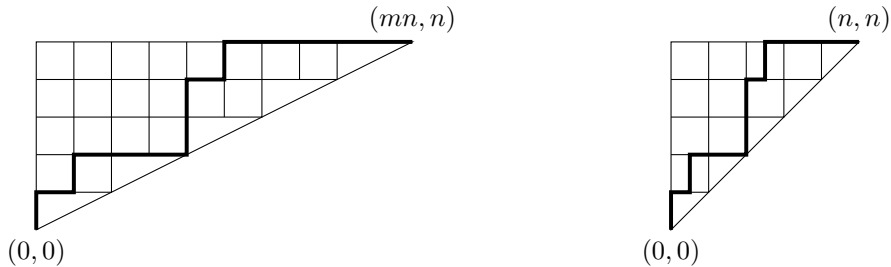
\begin{figure}[h]
    \centering
    \begin{tikzpicture}
    \draw (0,0) node[below] {$(0,0)$} -- (5,2.5) node[above] {$(mn,n)$} -- (0,2.5) -- cycle;
    \draw (0,0.5) -- (1,0.5) -- (1,2.5);
    \draw (0.5,0.5) -- (0.5,2.5);
    \draw (0,1) -- (2,1) -- (2,2.5);
    \draw (1.5,1) -- (1.5,2.5);
    \draw (0,1.5) -- (3,1.5) -- (3,2.5);
    \draw (2.5,1.5) -- (2.5,2.5);
    \draw (0,2) -- (4,2) -- (4,2.5);
    \draw (3.5,2) -- (3.5,2.5);
    \draw[line width = 1.5pt] (0,0) -- (0,0.5) -- (0.5,0.5) -- (0.5,1) -- (2,1) -- (2,2) -- (2.5,2) -- (2.5,2.5) -- (5,2.5);
    \end{tikzpicture}
    \hspace{2cm}
    \begin{tikzpicture}
    \draw (0,0) node[below] {$(0,0)$} -- (2.5,2.5) node[above] {$(n,n)$} -- (0,2.5) -- cycle;
    \draw (0,0.5) -- (0.5,0.5) -- (0.5,2.5);
    \draw (0,1) -- (1,1) -- (1,2.5);
    \draw (0,1.5) -- (1.5,1.5) -- (1.5,2.5);
    \draw (0,2) -- (2,2) -- (2,2.5);
    \draw[line width = 1.5pt] (0,0) -- (0,0.5) -- (0.25,0.5) -- (0.25,1) -- (1,1) -- (1,2) -- (1.25,2) -- (1.25,2.5) -- (2.5,2.5);
    \end{tikzpicture}
    \caption{An $m$-Dyck path and its corresponding continuous Dyck path}
    \label{fig:scaling}
\end{figure}

Since the higher $q,t$-Catalan numbers are known to be symmetric, this relationship implies a corresponding symmetry result for the $q,t$-Catalan measures.

\begin{corollary}
For all $n\geq 1$, $\mu_n$ is symmetric about the line $y=x$.
\end{corollary}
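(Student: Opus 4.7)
The plan is to deduce the symmetry of $\mu_n$ from the known symmetry $C^{(m)}_n(q,t)=C^{(m)}_n(t,q)$ by passing to the limit through Theorem~\ref{introthm:limit}. Let $\sigma:\R^2\to\R^2$ denote the reflection $(x,y)\mapsto(y,x)$, so that symmetry of a measure $\nu$ about the line $y=x$ is the statement $\sigma_*\nu=\nu$.

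First I would unpack what the polynomial identity $C^{(m)}_n(q,t)=C^{(m)}_n(t,q)$ says about the statistics. Comparing coefficients of $q^it^j$ on both sides of
\[ \sum_{D\in \mathcal D^{(m)}_n}q^{\dinv(D)}t^{\area(D)} = \sum_{D\in \mathcal D^{(m)}_n}q^{\area(D)}t^{\dinv(D)}, \]
we see that the two multisets $\{(\dinv(D),\area(D))\}_{D\in\mathcal D_n^{(m)}}$ and $\{(\area(D),\dinv(D))\}_{D\in\mathcal D_n^{(m)}}$ agree. Equivalently, the discrete measure
\[ \nu^{(m)}_n \;:=\; \frac{1}{m^{n-1}}\sum_{D\in \mathcal D^{(m)}_n}\delta_{\left(\frac{\dinv(D)}{m},\frac{\area(D)}{m}\right)} \]
satisfies $\sigma_*\nu^{(m)}_n = \nu^{(m)}_n$ for every $m\geq 1$.

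Next I would invoke Theorem~\ref{introthm:limit}, which says $\nu^{(m)}_n \to \mu_n$ weakly as $m\to\infty$. Since $\sigma$ is a homeomorphism of $\R^2$, pushforward by $\sigma$ is continuous with respect to weak convergence of measures; hence $\sigma_*\nu^{(m)}_n \to \sigma_*\mu_n$ weakly as well. Combining these with the previous paragraph yields
\[ \sigma_*\mu_n \;=\; \lim_{m\to\infty}\sigma_*\nu^{(m)}_n \;=\; \lim_{m\to\infty}\nu^{(m)}_n \;=\; \mu_n, \]
which is the desired symmetry.

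There is no real obstacle here: the only nontrivial input is the joint symmetry of $C^{(m)}_n(q,t)$, which is stated in the excerpt as a known fact (following from Mellit's work), together with Theorem~\ref{introthm:limit}, which is the paper's main result. The only point to be a little careful about is the continuity of pushforward under $\sigma$ for weak convergence, but this is standard since $\sigma$ is continuous and proper.
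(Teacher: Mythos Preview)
Your argument is correct and is essentially the same as the paper's: the symmetry $C^{(m)}_n(q,t)=C^{(m)}_n(t,q)$ makes each discrete measure $\nu^{(m)}_n$ invariant under $\sigma$, and then Theorem~\ref{introthm:limit} passes this invariance to the weak limit $\mu_n$. The only difference is that you spell out the continuity of $\sigma_*$ under weak convergence, which the paper leaves implicit.
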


Similar to the (higher) $q,t$-Catalan numbers, the symmetry is not clear from the combinatorial definition. It would be interesting to find a direct proof of the symmetry of these measures, as it might give insight into the symmetry of the (higher) $q,t$-Catalan numbers as well. \\

Finally, in Section \ref{sec:geometric} we relate the $q,t$-Catalan measures to the geometry of Hilbert schemes. One of the alternate definitions of higher $q,t$-Catalan numbers, the \textit{geometric higher $q,t$-Catalan numbers} $GC^{(m)}_n(q,t)$, was introduced and studied by Haiman \cite{Hai}. Haiman showed that $GC^{(m)}_n(q,t)$ agrees with the algebraically defined higher $q,t$-Catalan numbers \cite{Hai}. Much later, Mellit connected the algebraic polynomials to the combinatorial ones as a consequence of the proof of the ``compositional $(km,kn)$-shuffle conjecture" \cite{M}. Together, these results imply that for all $n,m\geq 1$ we have
\[ GC^{(m)}_n(q,t) = C^{(m)}_n(q,t).\] 

There is a general construction in algebraic geometry to encode the asymptotics of such families of polynomials as $m\to\infty$, called the Duistermaat-Heckman measure \cite{BP}. Replacing $GC^{(m)}_n(q,t)$ by $C^{(m)}_n(q,t)$ in the definition of the Duistermaat-Heckman measure, one precisely recovers the limit expression for the $q,t$-Catalan measure expressed in Theorem \ref{introthm:limit}. This allows for the following geometric restatement of Theorem \ref{introthm:limit} in terms of the punctual Hilbert scheme $\H^n_0$ which parameterizes length $n$ subschemes of $\C^2$ supported at the origin.

\begin{theorem}\label{introthm:geo}
For all $n\geq 1$, the $q,t$-Catalan measure $\mu_n$ is equal to the Duistermaat-Heckman measure of the punctual Hilbert scheme $\H^n_0$.
\end{theorem}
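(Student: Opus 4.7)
The plan is to unpack the definition of the Duistermaat--Heckman (DH) measure of $\H^n_0$ and then match it to the explicit limit of discrete measures appearing in Theorem \ref{introthm:limit}. In this way the result becomes a geometric restatement of Theorem \ref{introthm:limit}, with the bridge provided by Haiman and Mellit's identification $GC^{(m)}_n(q,t) = C^{(m)}_n(q,t)$ already recalled in the introduction.

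The first step is to set up the DH measure for $\H^n_0$. The punctual Hilbert scheme is projective of complex dimension $n-1$ (it is the fiber of the Hilbert--Chow morphism over a point), and the coordinate rescaling action of the torus $T = (\C^*)^2$ on $\C^2$ induces a $T$-action on $\H^n_0$. After fixing a suitable $T$-equivariant ample line bundle $L$, the DH measure is given by the weak limit
\[ \DHeck(\H^n_0) \;=\; \lim_{m\to\infty} \frac{1}{m^{n-1}} \sum_{(i,j)\in\Z^2} a^{(m)}_{i,j}\,\delta_{(i/m,\,j/m)}, \]
where $a^{(m)}_{i,j}$ is the multiplicity of the $T$-weight $(i,j)$ in $H^0(\H^n_0, L^{\otimes m})$. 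The normalization $1/m^{n-1}$ is the standard one arising from $\dim_\C \H^n_0 = n-1$ via Riemann--Roch.

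Next I would invoke Haiman's work \cite{Hai}: for the appropriate choice of $L$, the $T$-character of these sections is precisely the geometric higher $q,t$-Catalan polynomial, $\sum_{i,j} a^{(m)}_{i,j}\, q^i t^j = GC^{(m)}_n(q,t)$. Combined with the equality $GC^{(m)}_n(q,t) = C^{(m)}_n(q,t)$ (Haiman plus Mellit) and the dinv-area formula $C^{(m)}_n(q,t) = \sum_{D \in \mathcal D^{(m)}_n} q^{\dinv(D)} t^{\area(D)}$, the DH limit above rewrites as
\[ \DHeck(\H^n_0) = \lim_{m\to\infty} \frac{1}{m^{n-1}} \sum_{D \in \mathcal D^{(m)}_n} \delta_{(\dinv(D)/m,\,\area(D)/m)}, \]
which equals $\mu_n$ by Theorem \ref{introthm:limit}.

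The step I expect to require the most care is the first one: pinning down exactly which $T$-equivariant polarization $L$ makes Haiman's character computation apply to $\H^n_0$ (as opposed to the ambient $\Hilb^n(\C^2)$), and confirming that the convention used for the DH measure in \cite{BP} --- complex dimension in the exponent, rather than real dimension --- is the convention compatible with the normalization in Theorem \ref{introthm:limit}. Once these conventions are fixed, the rest of the argument is essentially bookkeeping.
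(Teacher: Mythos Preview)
Your proposal is correct and follows essentially the same route as the paper: unwind the definition of the DH measure of $(\H^n_0,\Oo(1),T)$, invoke the identification $GC^{(m)}_n(q,t)=C^{(m)}_n(q,t)$ (Haiman + Mellit) to rewrite it as the discrete limit from Theorem~\ref{introthm:limit}, and conclude. The one point you flagged as needing care---which polarization $L$ to use---is exactly what the paper specifies: $\Oo(1)$ is the restriction to $\H^n_0$ of the ample bundle coming from Haiman's description $\H^n\simeq\Proj\bigoplus_{m\geq 0}A^m$.
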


This reinterpretation is explained in more detail in Section \ref{sec:geometric}, and the proof as outlined above is summarized in the following diagram.

\[\begin{tikzcd}
C^{(m)}_n(q,t) \arrow[rrr,leftrightarrow,"\text{(Haiman + Mellit)}"',"="] \arrow[dd,rightsquigarrow,"\lim\limits_{m \to \infty}"',"\text{(Theorem \ref{introthm:limit})}"] &&& GC^{(m)}_n(q,t)\arrow[dd,rightsquigarrow,"\lim\limits_{m \to \infty}"',"\text{(Definition)}"] \\
\\
(q,t\text{-Catalan measure}) \arrow[rrr,dashed,leftrightarrow] &&& (\text{DH measure})
\end{tikzcd}\]

\vspace{0.5cm}

\textit{Acknowledgements:} I am very grateful to Jim Haglund for explaining to me the connection between Mellit's results \cite{M} and the higher $q,t$-Catalan numbers. I also thank Dave Anderson for teaching me about Duistermaat-Heckman measures, and providing valuable comments and suggestions on various drafts of this project.

\section{Higher $q,t$-Catalan Numbers}\label{sec:DyckPaths}

In this section we review the combinatorics of $m$-Dyck paths and higher $q,t$-Catalan numbers, following \cite{L}. \\

An $m$-Dyck path of height $n$ is a path from $(0,0)$ to $(mn,n)$ consisting of north steps and east steps, both of unit length, that never goes strictly below the line $y=\frac1mx$. The set of all $m$-Dyck paths of height $n$ is denoted $\mathcal{D}^{(m)}_n$. The \textit{area vector} of an $m$-Dyck path $D$ is the vector $\mathbf{area}_m(D) = (a_0(D),\dots,a_{n-1}(D))$, where $a_i(D)$ denotes the number of complete boxes between $D$ and the line $y=\frac1mx$ in the $i$th row, indexed from $i=0$. The \textit{area} of $D$ is the total number of such boxes in all rows, $\area_m(D) = \sum_{i=0}^n a_i(D)$. For example, the path on the left of Figure \ref{fig:scaling} has area vector $(0,1,0,2,3)$ and area 6.\\

The \textit{dinv} statistic of an $m$-Dyck path $D$ is defined in terms of the area vector of $D$ by the formula $\dinv_m(D) = \sum_{i<j}\sc_m(a_i(D)-a_j(D))$, where $\sc_m:\Z\to\Z$ is the function
\[ \sc_m(p) = \begin{cases}
m+1-p & \text{if } 1\leq p\leq m,\\
m+p & \text{if } -m\leq p \leq 0,\\
0 & \text{otherwise}.
\end{cases}\]
For example, if $D$ is again the path on the left of Figure \ref{fig:scaling} one calculates 
\begin{align*} \dinv_2(D) & =  \sc_2(0-1) + \sc_2(0-0) + \sc_2(0-2)+\sc_2(0-3)+\sc_2(1-0) \\
& \hspace{0.5cm} +\sc_2(1-2)+\sc_2(1-3)+\sc_2(0-2)+\sc_2(0-3)+\sc_2(2-3)\\
& = 1+2+0+0+2+1+0+0+0+1\\
& = 7
\end{align*}

The \textit{bounce} statistic of an $m$-Dyck path $D$ is defined in terms of a secondary lattice path associated to $D$ called a \textit{bounce path}. The bounce path of $D$ is a lattice path from $(0,0)$ to $(mn,n)$ made up of an alternating sequence of north steps $v_0,v_1,\dots$ and east steps $h_0,h_1,\dots$. Starting from $(0,0)$, the bounce path first travels north until it hits an east step of $D$, and the distance traveled is labelled $v_0$. The bounce path then takes an east step of distance $h_0 := v_0$. Now suppose inductively that $v_0,\dots,v_{i-1}$ and $h_0,\dots,h_{i-1}$ have been defined. After these steps, the bounce path travels north until it hits an east step of $D$, and the vertical distance traveled is labelled $v_i$. The bounce path then takes an east step of distance $h_i := v_i+v_{i-1}+\cdots+v_{i-m+1}$, where any $v_j$ with $j<0$ is treated as zero. This process terminates when the bounce path reaches $(mn,n)$. The bounce statistic is defined in terms of the vertical steps $v_0,v_1,\dots$ of the bounce path by the formula $ \bounce_m(D) = \sum_{i\geq0} i\cdot v_i.$\\

For example, the steps in the bounce path of the Dyck path in Figure \ref{fig:scaling} are given by the following table from which the bounce statistic can be computed as $1\cdot 1+3\cdot 2 + 4\cdot 1 = 11$.

\begin{table}[h]
    \centering
    \begin{tabular}{|c|c|c|c|c|c|c|}
    \hline
         $i$  & $0$ & $1$ & $2$ & $3$ & $4$ & $5$ \\\hline
        $v_i$ & $1$ & $1$ & $0$ & $2$ & $1$ & $0$ \\\hline
        $h_i$ & $1$ & $2$ & $1$ & $2$ & $3$ & $1$ \\\hline
    \end{tabular}
\end{table}

There are two equivalent definitions of the combinatorial higher $q,t$-Catalan numbers in terms of these statistics on $m$-Dyck paths,
\begin{align} C^{(m)}_n(q,t) & = \sum_{D\in \mathcal D^{(m)}_n}q^{\dinv_m(D)}t^{\area_m(D)}\label{higherdef1} \\
& = \sum_{D\in \mathcal D^{(m)}_n}q^{\area_m(D)}t^{\bounce_m(D)}.\label{higherdef2}
\end{align}

Loehr \cite{L} constructs a bijection $\phi_m:\mathcal{D}^{(m)}_n\to \mathcal{D}^{(m)}_n$ such that $\dinv_m(D) = \area_m(\phi_m(D))$ and $\area_m(D) = \bounce_m(\phi_m(D))$ for all $D\in \mathcal{D}^{(m)}_n$. The existence of such a map implies that the two definitions (\ref{higherdef1}) and (\ref{higherdef2}) agree. Indeed,
\begin{align*} \sum_{D\in \mathcal D^{(m)}_n}q^{\dinv_m(D)}t^{\area_m(D)}
& = \sum_{D\in \mathcal D^{(m)}_n}q^{\area_m(\phi_m(D))}t^{\bounce_m(\phi_m(D))}\\
& = \sum_{D'\in \mathcal D^{(m)}_n}q^{\area_m(D')}t^{\bounce_m(D')}.
\end{align*}

Let us review the definition of this bijection. Let $D$ be an $m$-Dyck path. The bounce path of $\phi_m(D)\in \mathcal{D}^{(m)}_n$ will be given by the sequence $v_0,h_0,v_1,h_1,\dots,v_s,h_s$, where $v_i$ is equal the number of occurrences of $i$ in the area vector of $D$, and $h_i = v_i+\cdots+v_{i-m+1}$. Let $p_0,p_1,p_2,\dots,p_{s+1}$ be the sequence of points on the bounce path where $p_0=(0,0)$, $p_{s+1} = (mn,n)$, and $p_i$ is the end point of the partial bounce path $v_0,h_0,\dots,v_{i-1}$ for $1\leq i\leq s$. The path $\phi_m(D)$ will pass through all of the $p_i$'s, and the rule for drawing the portion of the path $\phi_m(D)$ from $p_i$ to $p_{i+1}$ is as follows. Read through the area vector of $D$, $(a_0(D),\dots,a_{n-1}(D))$, from left to right. Every time the symbol $i$ is seen, $\phi_m(D)$ takes a unit step north. Every time a symbol in $\{i-1,\dots,i-m\}$ is seen, $\phi_m(D)$ takes a unit step east.\\

Loehr shows that $\phi_m(D)$ is well defined (and the bounce path $\phi_m(D)$ is as claimed), and that the map sends dinv to area to bounce and is a bijection. The existence of such a map does not immediately imply the conjectural joint symmetry property $C^{(m)}_n(q,t) = C^{(m)}_n(t,q)$. It does, however, imply the weaker statement that these three statistics have the same univariate distributions,
\[ \sum_{D\in \mathcal D^{(m)}_n}q^{\dinv_m(D)} = \sum_{D\in \mathcal D^{(m)}_n}q^{\area_m(D)} = \sum_{D\in \mathcal D^{(m)}_n}q^{\bounce_m(D)}. \]
These identities can be obtained by specializing (\ref{higherdef1}) and (\ref{higherdef2}) to $q=1$ or $t=1$ separately.\\

The joint symmetry property for higher $q,t$-Catalan measures follows from Millet's results in \cite{M} proving the compositional $(km,kn)$-shuffle conjecture. This is discussed in more detail in Section \ref{sec:geometric}. For now, we state the joint symmetry as a theorem to refer to later.

\begin{theorem}[Mellit \cite{M}]\label{thm:sym}
The (combinatorial) higher $q,t$-Catalan numbers satisfy the joint symmetry property, 
\[ C^{(m)}_n(q,t) = C^{(m)}_n(t,q), \]
for all $n,m\geq 1$.
\end{theorem}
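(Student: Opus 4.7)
The plan is to route the symmetry through an algebraically defined version of the higher $q,t$-Catalan numbers, which is manifestly symmetric in $q,t$, and then cite the compositional $(km,kn)$-shuffle conjecture to identify this algebraic polynomial with the combinatorial one defined in (\ref{higherdef1}). So the proof splits into two entirely different ingredients: a short symmetric-function calculation, and a very deep combinatorial-algebraic identity proved by Mellit.

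For the first ingredient, I would introduce the algebraic higher $q,t$-Catalan number $\widetilde{C}^{(m)}_n(q,t) := \langle \nabla^m e_n, e_n \rangle$, where $\nabla$ is the Bergeron--Garsia operator on symmetric functions and the inner product is the Hall inner product. Since $\nabla$ is diagonalized by the modified Macdonald polynomials $\widetilde{H}_\mu(X;q,t)$ with eigenvalue $q^{n(\mu')}t^{n(\mu)}$, expanding $e_n$ in the $\widetilde{H}_\mu$ basis gives an explicit sum over partitions $\mu \vdash n$ of a product of the eigenvalue $q^{mn(\mu')}t^{mn(\mu)}$ and known coefficients. The Macdonald duality $\widetilde{H}_\mu(X;q,t) = \widetilde{H}_{\mu'}(X;t,q)$, together with the reindexing $\mu \leftrightarrow \mu'$, swaps the two factors and sends the whole sum to itself, yielding $\widetilde{C}^{(m)}_n(q,t) = \widetilde{C}^{(m)}_n(t,q)$. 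This is essentially a formal calculation, and is the content of the ``algebraic'' side of the story alluded to in Section \ref{sec:Intro}.

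For the second ingredient, I would invoke Mellit's theorem on the compositional $(km,kn)$-shuffle conjecture \cite{M}, specialized to $k=1$, which yields the identity $\widetilde{C}^{(m)}_n(q,t) = C^{(m)}_n(q,t)$ between the algebraic and combinatorial versions. Combining with the previous paragraph gives the claimed $C^{(m)}_n(q,t) = C^{(m)}_n(t,q)$. The main obstacle is, unsurprisingly, this second step: it is the deep combinatorial-algebraic content of Mellit's work, whose proof proceeds via the action of the elliptic Hall (Schiffmann) algebra on symmetric functions and an intricate system of creation/annihilation operators extending the Carlsson--Mellit framework used for the original shuffle conjecture. By contrast, the $q,t$-symmetry on the algebraic side and the final combination are both essentially formal. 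Since the paper only needs the statement, not a new proof, it suffices to cite \cite{M}.
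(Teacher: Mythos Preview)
Your proposal is correct and matches the paper's approach: the paper does not prove Theorem~\ref{thm:sym} but cites \cite{M}, and the surrounding discussion (including Section~\ref{sec:geometric}) spells out exactly the route you describe, namely that $C^{(m)}_n(q,t)$ agrees with the manifestly $q,t$-symmetric algebraic polynomial $\langle \nabla^m e_n, e_n\rangle$ as a consequence of Mellit's proof of the compositional $(km,kn)$-shuffle conjecture. Your additional detail on why the algebraic side is symmetric (Macdonald duality under $\mu\leftrightarrow\mu'$) is a welcome elaboration of what the paper leaves implicit.
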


\section{Continuous Dyck Paths and the $q,t$-Catalan Measures}\label{sec:ContDyckPaths}

\subsection{Continuous Dyck Path Statistics}\label{sec:contstats}

A \textit{continuous Dyck path of height} $n$ is a path from $(0,0)$ to $(n,n)$ consisting of north steps of unit length and east steps of arbitrary positive length that never goes below the diagonal $y=x$. To avoid having multiple representations of the same path, we assume that continuous Dyck paths never contain two or more consecutive east steps. We denote the set of continuous Dyck paths of height $n$ by $\mathcal D^{(\cont)}_n$.\\

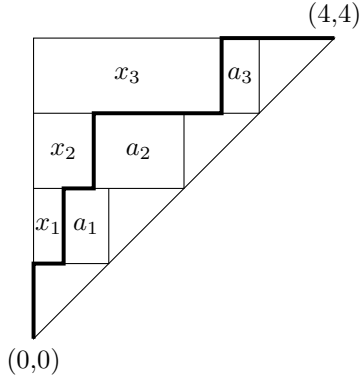
\begin{figure}[h]
    \centering
    \begin{tikzpicture}
    \draw (0,0)--(0,4)--(4,4)--cycle;
    \draw (0,1)--(1,1)--(1,2);
    \draw (0,2)--(2,2)--(2,3);
    \draw (0,3) -- (3,3) --(3,4);
    \draw[line width = 1.5pt] (0,0)--(0,1) -- (0.4,1)--(0.4,2) --(0.8,2)--(0.8,3)--(2.5,3)--(2.5,4)--(4,4);
    \node at (0.2,1.5) {$x_1$};
    \node at (0.7,1.5) {$a_1$};
    \node at (0.4,2.5) {$x_2$};
    \node at (1.4,2.5) {$a_2$};
    \node at (1.25,3.5) {$x_3$};
    \node at (2.75,3.5) {$a_3$};
    \node[below] at (0,0) {(0,0)};
    \node[above] at (4,4) {(4,4)};
    \end{tikzpicture}
    \caption{A continuous Dyck path of height $4$}
    \label{fig:coords}
\end{figure}

Let $D$ be a continuous Dyck path of height $n$, and for $i=0,\dots,n-1$ define $x_i(D)$ to be the $x$-coordinate of the $i$th north step of the path $D\in \mathcal{D}^{(\cont)}_n$ indexed from $i=0$. It follows from the definition that continuous Dyck paths must start with at least one north step, and therefore $x_0(D)$ is always zero. It will be convenient to include this leading zero as one of the coordinates regardless. The \textit{area vector of} $D$ is the vector $\mathbf{area}(D) = (a_0(D),\dots,a_{n-1}(D))\in \R^n$, where $a_i(D) = i-x_i(D)$ for all $i=0,\dots,n-1$ (see Figure \ref{fig:coords}). The \textit{area} of $D$ is defined by
\[ \area(D) = \sum_{i=0}^{n-1}a_i(D). \]
For example, the path $D$ depicted in Figure \ref{fig:coords} has north steps at the $x$-coordinates $(x_0,x_1,x_2,x_3) = (0,0.4,0.8,2.5)$. The area vector of $D$ is therefore $(0,0.6,1.2,0.5)$, and so $\area(D) = 2.3$.\\

The \textit{dinv} statistic of a continuous Dyck path $D$ is also defined in terms of its area vector by the formula
\[ \dinv(D) = \sum_{0\leq i<j\leq n-1} \sc(a_i(D)-a_j(D)), \]
where $\sc(x) = \max\{ 1-|x|,0\}$.\\

The dinv statistic of the path $D$ depicted in Figure \ref{fig:coords} can be calculated as
\begin{align*} \dinv(D) & = \sc(0.6)+\sc(1.2)+\sc(0.5)+\sc(1.2-0.6)+\sc(0.5-0.6)+\sc(0.5-1.2)\\
& = 0.4 + 0 + 0.5 + 0.4 + 0.9 + 0.3\\
& = 2.5
\end{align*}

The final statistic on continuous Dyck paths is defined in terms of a parametrization of the path that we call the \textit{bounce parametrization}. This parametrization is analogous to the bounce path of an $m$-Dyck path (see Section \ref{sec:limits} for the precise relationship). Given a continuous Dyck path $D$, the bounce parametrization of $D$ travels along the path $D$ from $(0,0)$ to $(n,n)$ on some time interval $0\leq t\leq t_{\max}(D)$. The parametrization is uniquely determined by the following two rules:
\begin{enumerate}
    \item Whenever the parametrization reaches the bottom of a north step of $D$, say of length $v$, the parametrization instantaneously takes $v$ (unit length) north steps to the top of the step.
    \item The parametrization travels continuously along the east steps of $D$ as $t$ increases, and the horizontal speed of the parametrization at any given time $t$ is equal to total number of (unit length) north steps taken by the parametrization in the time interval $[t-1,t]$.
\end{enumerate}

We define $b_i(D)$ for $i=0,\dots,n-1$ to be the time at which the bounce parametrization takes its $i$th (unit length) north step, indexed from $i=0$. The bounce parametrization is entirely determined by the vector $(b_0(D),\dots,b_{n-1}(D))$. Indeed, by the fundamental theorem of calculus the horizontal position of the bounce parametrization is computed by the function 
\[ r_D(t) = \int_0^t \sum_{i=0}^{n-1}\mathds{1}_{[b_i(D),b_i(D)+1]}\mathrm{d}s. \]
We may therefore have equivalently defined the coordinates $b_i(D)$ to be the unique nondecreasing sequence $b_0(D)\leq \cdots \leq b_{n-1}(D)$ such that the function above satisfies $r_D(b_i(D))= x_i(D)$ for all $i=0,\dots,n-1$. We call $\mathbf{bounce}(D) = (b_0(D),\dots,b_{n-1}(D))$ the \textit{bounce vector} of $D$, and define the bounce of $D$ by the formula
\[ \bounce(D) = \sum_{i=0}^{n-1} b_i(D). \]

Returning again to the path $D$ depicted in Figure \ref{fig:coords}, the bounce parametrization of $D$ can be described as follows:
\begin{itemize}
    \item At time $t=0$, the bounce parametrization of $D$ takes a north step from $(0,0)$ to $(0,1)$, and the first coordinate of the bounce vector is recorded as $b_0=0$. The path then begins moving east at speed $1$.
    \item At time $t=0.4$, the parametrization reaches $(0.4,1)$ where $D$ has a north step, so the next coordinate of the bounce vector is $b_1=0.4$. The parametrization takes a north step up to $(0.4,2)$ and continues moving east, now at speed $2$.
    \item At time $t=0.6$ the parametrization reaches the point $(0.8,2)$ where $D$ has its next north step, so the next coordinate of the bounce vector is $b_2=0.6$. The parametrization then takes a north step up to $(0.8,3)$ and continues moving east, now at speed $3$.
    \item At time $t=1$, the parametrization is at the point $(2,3)$ and one unit time has passed since the first north step of the bounce parametrization, so the speed slows down to $2$.
    \item At time $t=1.25$, the parametrization reaches $(2.5,3)$ where $D$ has its final north step, so the final coordinate of the bounce vector is $b_3=1.25$. The parametrization takes its final north step to $(2.5,4)$ and continues moving east, now at speed $3$ again.
    \item At times $t=1.4$, $1.6$, and $2.25$ the parametrization slows to speed $2$, then speed $1$, and then stops. These occur at the points $(2.95,4),$ $(3.35,4)$, and $(4,4)$ respectively.
\end{itemize}
The bounce vector of $D$ is therefore $(0,0.4,0.6,1.25)$, and so the bounce of $D$ is $2.25$. 

\subsection{Area and Bounce Polytopes}

The set of all area vectors of continuous Dyck paths of height $n$ forms an $(n-1)$-dimensional polytope, 
\[ A_n = \{ (a_0,\dots,a_{n-1})\in \R^n \,|\, a_0=0, \text{ and } 0\leq a_{i+1}\leq a_i+1 \text{ for all }i\geq 0\}. \]
Similarly, the set of all bounce vectors forms another $(n-1)$-dimensional polytope,
\[ B_n = \{ (b_0,\dots,b_{n-1})\in \R^n\,\big|\, b_0=0,\text{ and } b_i\leq b_{i+1}\leq b_i+1 \text{ for all } i\geq 0 \}. \] 
We call $A_n$ and $B_n$ the \textit{area polytope} and \textit{bounce polytope} respectively. When convenient, we will consider $A_n$ and $B_n$ as full dimensional polytopes in $\R^{n-1}$ by forgetting the first coordinate (which is identically zero in both cases). 

\begin{proposition}\label{area vs bounce}
The area vector $(a_0(D),\dots,a_{n-1}(D))$ and bounce vector $(b_0(D),\dots,b_{n-1}(D))$ of a continuous Dyck path $D\in \mathcal{D}^{(\cont)}_n$ are related by the formulas
\[ a_j(D) =  \sum_{i=0}^{j-1} \sc(b_j(D)-b_i(D)) \]
for all $j=0,\dots,n-1$.
\end{proposition}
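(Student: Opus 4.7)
The plan is to compute the horizontal position $x_j(D)$ of the $j$th north step directly from the integral formula for the bounce parametrization given in the text, and then convert the result to a statement about $a_j(D)$ using the identity $a_j(D) = j - x_j(D)$.

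The excerpt gives us
\[ r_D(t) = \int_0^t \sum_{i=0}^{n-1}\mathds{1}_{[b_i(D),\,b_i(D)+1]}(s)\,\mathrm ds, \qquad r_D(b_j(D)) = x_j(D). \]
I would interchange the sum and integral and evaluate at $t=b_j(D)$, yielding
\[ x_j(D) \;=\; \sum_{i=0}^{n-1} \vol\bigl([0,\,b_j(D)]\cap[b_i(D),\,b_i(D)+1]\bigr). \]
Now I would split the sum according to $i<j$, $i=j$, $i>j$, using the fact (stated in the definition) that $b_0(D)\leq b_1(D)\leq\cdots\leq b_{n-1}(D)$. For $i>j$ the two intervals meet in at most a point, contributing zero; for $i=j$ the same holds since $[0,b_j(D)]$ and $[b_j(D),b_j(D)+1]$ overlap only at the endpoint; for $i<j$ the overlap is $[b_i(D),\,\min(b_j(D),b_i(D)+1)]$, which has length $\min(b_j(D)-b_i(D),\,1)$.

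Collecting the pieces, I obtain $x_j(D)=\sum_{i=0}^{j-1}\min(b_j(D)-b_i(D),1)$, and then
\[ a_j(D)=j-x_j(D)=\sum_{i=0}^{j-1}\bigl(1-\min(b_j(D)-b_i(D),\,1)\bigr)=\sum_{i=0}^{j-1}\max\bigl(1-(b_j(D)-b_i(D)),\,0\bigr). \]
Since $b_j(D)\geq b_i(D)$ for $i<j$, we have $|b_j(D)-b_i(D)|=b_j(D)-b_i(D)$, so the summand equals $\sc(b_j(D)-b_i(D))$, which is the claimed formula.

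I do not anticipate a serious obstacle here: the proof is essentially a change-of-order-of-integration argument, and the only step that needs genuine care is confirming that the coordinates $b_i(D)$ are nondecreasing so that the cases $i<j$, $i=j$, $i>j$ of the overlap computation behave as claimed. This monotonicity is built into the definition of the bounce parametrization (each subsequent north step of $D$ is encountered later in time than the previous one), so the argument goes through cleanly.
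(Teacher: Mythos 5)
Your proof is correct and follows essentially the same route as the paper's: both plug $t=b_j(D)$ into the integral defining $r_D$, use the monotonicity $b_0\leq\cdots\leq b_{n-1}$ to discard the terms with $i\geq j$, and observe that each remaining term contributes $\min(b_j-b_i,1)=1-\sc(b_j-b_i)$. The only cosmetic difference is that you solve for $x_j$ before converting to $a_j$, whereas the paper works with $j-a_j$ throughout.
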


We suppress the path $D$ in the notation throughout the proof, writing $a_j$ for $a_j(D)$, etc.

\begin{proof}
The bounce vector of $D$, $(b_0,\dots,b_{n-1})$, satisfies $r(b_j)=x_j = j-a_j$ for all $j=0,\dots,n-1$, where $r(t)$ is the function
\[ r(t) = \int_0^t \sum_{i=0}^{n-1}\mathds{1}_{[b_i,b_i+1]}ds. \] 

Plugging in $t=b_j$, we have
\[ j-a_j = \sum_{i=0}^{n-1}\int_0^{b_j} \mathds{1}_{[b_i,b_i+1]}dt.\]
Since $b_0\leq\cdots\leq b_{n-1}$, all of the terms $\int_0^{b_j}\mathds{1}_{[b_i,b_i+1]}ds$ for $i\geq j$ are zero. The remaining terms are either equal to $1$ (if $b_i+1< b_j$) or $b_j-b_i$ (if $b_j\in [b_i,b_i+1]$). These cases can be expressed succinctly using the function $1-\sc(x) = \min\{|x|,1\}$, giving the relation
\[ j-a_j= \sum_{i=0}^{j-1}(1-\sc(b_j-b_i)) = j - \sum_{i=0}^{j-1}\sc(b_j-b_i). \]
Solving for $a_j$ yields the desired formula.
\end{proof}

Area and bounce vectors provide two ways to parameterize the set of continuous Dyck paths by polytopes $A_n\leftrightarrow \mathcal{D}^{(\cont)}_n \leftrightarrow B_n$, and Proposition \ref{area vs bounce} can be viewed as a description of the change of coordinates map $B_n\to A_n$. Proposition \ref{area vs bounce} shows in particular that the bijection $B_n\to A_n$ defined by sending the bounce vector of a path $D$ to its area vector is piece-wise linear. \\

As outlined in the introduction, we aim to equip $\mathcal{D}^{(\cont)}_n$ with a measure by identifying it with an $(n-1)$-polytope. The area and bounce polytopes provide two candidates for this job, and the resulting measures on $\mathcal{D}^{(\cont)}_n$ are different as long as $n\geq 3$. Equivalently, the bijection $B_n\to A_n$ described in Proposition \ref{area vs bounce} is not measure preserving. This is necessarily the case because the defining inequalities show that $B_n$ is strictly contained in $A_n$. For our purposes, the more natural measure on $\mathcal{D}^{(\cont)}_n$ is the one from $A_n$.

\begin{definition}
Let $\lambda_n$ denote the restriction of the Lebesgue measure from $\R^{n-1}$ to the full-dimensional polytope $A_n\subseteq \{0\}\times\R^{n-1}=\R^{n-1}.$ By abuse of notation, we also consider $\lambda_n$ as a measure on $\mathcal{D}^{(\cont)}_n$ via the bijection identifying a continuous Dyck path $D$ with its area vector.
\end{definition}

\subsection{$q,t$-Catalan Measures}\label{sec:measures}

The \textit{$q,t$-Catalan measure} $\mu_n$ is defined to be the pushforward of $\lambda_n$ from $\mathcal{D}^{(\cont)}_n$ to $\R^2$ by either map 
\begin{equation}
\begin{tikzcd}
\mathcal{D}^{(\cont)}_n \arrow[rrr,shift right,"\area\times \bounce"']\arrow[rrr,shift left, "\dinv\times\area"]&&& \R^2.\label{def:qtmeasure}
\end{tikzcd}
\end{equation}

In Section \ref{sec:T} we define a map $T:\mathcal{D}^{(\cont)}_n\to\mathcal{D}^{(\cont)}_n$ and show that it is measure preserving in the sense that $T_*(\lambda_n) = \lambda_n$ (Proposition \ref{prop:Tmeasure}), and that $(\area\times\bounce)\circ T = (\dinv\times\area)$ as functions $\mathcal{D}^{(\cont)}_n\to \R^2$ (Lemma \ref{lemma:Tstats}). For now, we simply assert that there exists such a map and use it in the following calculation to show that the two formulas for the $q,t$-Catalan measure agree:
\begin{align*} 
(\dinv\times\area)_*\,(\lambda_n) & = ((\area\times\bounce)\circ T)_*\, (\lambda_n) \\
& = (\area\times\bounce)_*\, (T_*\, (\lambda_n)) \\
& = (\area\times\bounce)_*\,(\lambda_n). 
\end{align*}

\begin{proposition}
For all $n\geq 1$, the $q,t$-Catalan measure $\mu_n$ is compactly supported and has total weight $\mu_n(\R^2) = \frac{n^{n-2}}{(n-1)!}$.
\end{proposition}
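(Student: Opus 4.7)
The claim splits into two parts: compactness of the support, and the total weight. Compactness follows from a quick boundedness argument on the area polytope together with continuity of the statistics, while the total weight reduces to a Euclidean volume computation that I would perform by a substitution identifying $A_n$ with a Pitman--Stanley polytope.

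For compactness, induction on $i$ using $a_0 = 0$ and $a_{i+1} \leq a_i + 1$ gives $0 \leq a_i \leq i$ on $A_n$, so $A_n \subseteq [0, n-1]^{n-1}$ is a compact polytope. The maps $\area$ and $\dinv$ are continuous on $A_n$ directly from their defining formulas, and $\bounce$ is continuous because the bijection $B_n \leftrightarrow A_n$ is a continuous map between compact polytopes by Proposition~\ref{area vs bounce} and hence a homeomorphism, with $\bounce$ linear on $B_n$. Therefore the image of $A_n$ under either of the two maps in (\ref{def:qtmeasure}) is compact, so $\mu_n$ is supported on a compact subset of $\R^2$.

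For the total weight, since $\mu_n$ is a pushforward of $\lambda_n$, we have $\mu_n(\R^2) = \lambda_n(A_n) = \vol(A_n)$, the $(n-1)$-dimensional Lebesgue volume of the area polytope. I would then perform the linear substitution $y_i := 1 + a_{i-1} - a_i$ for $i = 1, \ldots, n-1$ (with the convention $a_0 := 0$). The Jacobian matrix is lower triangular with $-1$ on the diagonal, so this substitution preserves $(n-1)$-dimensional Lebesgue measure. The constraint $a_i \leq a_{i-1} + 1$ becomes $y_i \geq 0$, and the constraint $a_i \geq 0$ becomes $y_1 + \cdots + y_i \leq i$. Hence $\vol(A_n)$ equals the volume of the Pitman--Stanley polytope
\[
\Pi_{n-1} := \{y \in \R^{n-1}_{\geq 0} \,:\, y_1 + \cdots + y_k \leq k \text{ for all } 1 \leq k \leq n-1\},
\]
which by the classical formula of Pitman and Stanley equals $\tfrac{n^{n-2}}{(n-1)!}$.

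The main obstacle is the volume of the Pitman--Stanley polytope itself, which I would cite directly. A self-contained argument is also within reach: introduce the parametric family $V_n(c) := \vol\{(a_1, \ldots, a_{n-1}) : 0 \leq a_i,\, a_i \leq a_{i-1} + 1,\, a_1 \leq c\}$ with $a_0 := 0$, verify the recursion $V_n(c) = \int_0^c V_{n-1}(a_1 + 1) \, da_1$ with base case $V_2(c) = c$, and confirm by induction that $V_n(1) = \tfrac{n^{n-2}}{(n-1)!}$, guided by a closed form such as lattice-point counts in $k \cdot \Pi_{n-1}$ giving parking functions.
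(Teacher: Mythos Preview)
Your argument is correct. The compactness portion is essentially the same as the paper's (continuous image of a compact polytope), and both proofs reduce the total weight to $\vol(A_n)$. Where you diverge is in computing this volume: you make the affine change of variables $y_i = 1 + a_{i-1} - a_i$ to identify $A_n$ with the Pitman--Stanley polytope $\Pi_{n-1}(1,\ldots,1)$ and then invoke its known volume $\tfrac{n^{n-2}}{(n-1)!}$, while the paper computes $\vol(A_n)$ via its Ehrhart polynomial. The paper observes that the $\tfrac{1}{m}$-integral points of $A_n$ are exactly the rescaled area vectors of $m$-Dyck paths, so $P_{A_n}(m) = C_n^{(m)} = \tfrac{1}{mn+1}\binom{(m+1)n}{n}$, and the leading coefficient of this degree-$(n-1)$ polynomial is $\tfrac{n^{n-2}}{(n-1)!}$. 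Your route is arguably more self-contained (especially with the integral recursion you sketch), but the paper's approach has the advantage of tying the volume directly to the $m$-Dyck path combinatorics and the higher Catalan numbers that drive the rest of the paper, in particular foreshadowing the limit in Theorem~\ref{thm:main}. Both are perfectly valid; they are essentially the parking-function and Fuss--Catalan sides of the same lattice-point count.
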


\begin{proof}
Identifying $\mathcal{D}^{(\cont)}_n\leftrightarrow A_n$, $\mu_n$ is the pushforward of Lebesgue measure on $A_n$ by the map $\dinv\times\area:A_n\to \R^2$. The $q,t$-Catalan measure $\mu_n$ is supported on the image of this map, which is compact.\\

Since pushforwards preserve the total weight of a measure, $\mu_n(\R^2)$ is equal to the Lebesgue measure, i.e. volume, of $A_n$ considered as a polytope in $\R^{n-1}$. One way to compute this volume is using the Ehrhart polynomial, $P_{A_n}(m)$, whose value at each positive integer $m$ is equal to the number of integer points in the dilation $mA_n$. Equivalently, $P_{A_n}(m)$ counts the number of $1/m$-integer points in $A_n$. But $1/m$-integer points in $A_n$ are precisely the area vectors of $m$-Dyck paths of height $n$ scaled by $1/m$, as is illustrated in Figure \ref{fig:scaling}. It follows that
\[ P_{A_n}(m) = C^{(m)}_n = \frac{1}{mn+1}{(m+1)n\choose n} = \frac{(mn+n)\cdots(mn+2)}{n!} = \frac{n^{n-2}}{(n-1)!}m^{n-1}+O(m^{n-2}) \]
The general theory of Ehrhart polynomials implies that the leading coefficient of this polynomial, $\frac{n^{n-2}}{(n-1)!}$, is equal to the volume of $A_n$.
\end{proof}

\begin{proposition}
For $n\geq 3$, $\mu_n$ is absolutely continuous with respect to Lebesgue measure and its density function is piece-wise polynomial of degree $n-3$. 
\end{proposition}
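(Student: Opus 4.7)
The plan is to write $\mu_n$ as a finite sum of pushforwards of Lebesgue measure on polyhedral cells of $A_n$ on which $\dinv \times \area$ is affine, and then invoke the classical piecewise-polynomial fact for linear projections of polytopes.

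First, since $\sc(x) = \max\{1 - |x|, 0\}$ is piecewise linear with break locus $\{x = -1, 0, 1\}$, the formula $\dinv = \sum_{i<j}\sc(a_i - a_j)$ shows that $\dinv$ is piecewise affine on $A_n$ with respect to the hyperplane arrangement $\{a_i - a_j \in \{-1, 0, 1\}\}_{0 \leq i < j \leq n-1}$. Let $A_n = P_1 \cup \cdots \cup P_N$ denote the resulting subdivision into full-dimensional cells, so that each $L_\ell := (\dinv \times \area)|_{P_\ell}$ extends to an affine map $\R^{n-1} \to \R^2$.

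Next, I would apply the coarea formula cell by cell. Provided $L_\ell$ has rank $2$, the pushforward $(L_\ell)_*(\lambda_n|_{P_\ell})$ is absolutely continuous on $\R^2$, with density at $(q,t)$ given by
\[ f_\ell(q,t) = \frac{\vol_{n-3}\bigl(P_\ell \cap L_\ell^{-1}(q,t)\bigr)}{|J_\ell|}, \]
where $|J_\ell| = \sqrt{\det(L_\ell L_\ell^\top)}$ is the $2$-dimensional Jacobian factor of $L_\ell$. By the classical fact that the volume of a polytope sliced by affine subspaces is a piecewise polynomial function of degree equal to the slice dimension in the parameters describing the slice (obtainable by direct integration against the facet description, or as a special case of Brion--Vergne), $(q,t) \mapsto \vol_{n-3}(P_\ell \cap L_\ell^{-1}(q,t))$ is piecewise polynomial of degree $(n-1) - 2 = n - 3$ on $L_\ell(P_\ell)$. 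Summing over $\ell$ yields the absolutely continuous density of $\mu_n$ as a piecewise polynomial of degree $n-3$.

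The main obstacle is verifying the rank-$2$ hypothesis on every full-dimensional cell, since rank-$1$ contributions would push Lebesgue measure onto line segments in $\R^2$ and destroy absolute continuity. As $\nabla \area = (1, \ldots, 1)$ in the coordinates $(a_1, \ldots, a_{n-1})$, rank $< 2$ on $P_\ell$ would force all the partial derivatives
\[ \frac{\partial \dinv}{\partial a_k} = \sum_{j \neq k} \sc'(a_k - a_j) \]
to coincide throughout $P_\ell$, where each $\sc'(a_k - a_j) \in \{-1, 0, 1\}$ is a constant on $P_\ell$ determined by which of $(-\infty,-1)$, $(-1,0)$, $(0,1)$, $(1,\infty)$ contains $a_k - a_j$. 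I would rule this out by a combinatorial case analysis organized by these sign patterns: using $a_0 = 0$ and the polytope inequalities $0 \leq a_{j+1} \leq a_j + 1$, one produces on every full-dimensional cell (for $n \geq 3$) a pair of indices $k \neq k'$ for which the two partials differ. This check is the most involved step, since one must handle the many sign configurations uniformly in $n$, but once it is in place the cell-by-cell coarea description delivers both the absolute continuity of $\mu_n$ and the piecewise polynomial density of degree $n-3$.
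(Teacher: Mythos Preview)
Your approach is essentially the paper's: subdivide $A_n$ into full-dimensional cells on which $\dinv\times\area$ is affine, then invoke the standard fact that a linear projection of a polytope with full-dimensional image pushes Lebesgue measure forward to an absolutely continuous measure with piecewise polynomial density of degree $\dim(P)-k$. You are in fact more careful than the paper in isolating the rank-$2$ (full-dimensional image) hypothesis as the real content---the paper simply asserts that each piece ``can be broken up into a sum of such terms'' and does not carry out the sign-pattern case analysis you outline.
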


\begin{proof}
Let $P$ be a polytope equipped with a linear map $\pi:P\to \R^k$ such that $\pi(P)\subseteq \R^k$ is full-dimensional. Then the pushforward of Lebesgue measure from $P$ to $\R^k$ is absolutely continuous with respect to Lebesgue measure on $\R^k$ and its density function is piece-wise polynomial of degree $\dim(P)-k$. When $n\geq 3$, the piece-wise linear map $(\dinv\times\area)A_n\to \R^2$ can be broken up into a sum of such terms by considering the regions on which the projection is linear separately. Each term is the projection from an $(n-1)$-dimensional polytope to $\R^2$, so the density functions are piece-wise polynomial of degree $n-3$. The $q,t$-Catalan measure is the sum of these, which completes the proof.
\end{proof}

For $n\geq 3$, let $f_n:\R^2\to \R$ denote the piece-wise polynomial density function for the $q,t$-Catalan measure $\mu_n$. At any given point $p\in \R^2$, $f_n(p)$ can be computed exactly by computing the volume of the fiber of the projection $A_n\to \R^2$ over $p$. Furthermore, the images of the edges of $A_n$ subdivide $\R^2$ into regions on which $f_n$ is given by a single polynomial of degree $n-3$. One can therefore compute the entire density function $f_n$ for any given $n$ by interpolating on each of the regions described above.

\begin{example}\label{ex:4measure}
The following figure illustrates the piece-wise linear density function $f_4$ for the $q,t$-Catalan measure $\mu_4$. In each of the three regions below, $f_4$ is equal to the indicated linear function. Outside of these regions, $f_4$ is zero.
\[\begin{tikzpicture} 
\draw[help lines, color=gray!60, dashed] (0,0) grid (6.9,6.9);

\draw [-{Latex[length=1.5mm]}] (0,0) -- (7,0);
\draw [-{Latex[length=1.5mm]}] (0,0) -- (0,7);

\node[below] at (1,0) {$1$};
\node[below] at (2,0) {$2$};
\node[below] at (3,0) {$3$};
\node[below] at (4,0) {$4$};
\node[below] at (5,0) {$5$};
\node[below] at (6,0) {$6$};
\node[left] at (0,1) {$1$};
\node[left] at (0,2) {$2$};
\node[left] at (0,3) {$3$};
\node[left] at (0,4) {$4$};
\node[left] at (0,5) {$5$};
\node[left] at (0,6) {$6$};

\node[circle,fill,inner sep = 1pt] (A) at (6,0) {};
\node[circle,fill,inner sep = 1pt] (B) at (3,1) {};
\node[circle,fill,inner sep = 1pt] (C) at (2,2) {};
\node[circle,fill,inner sep = 1pt] (D) at (1,3) {};
\node[circle,fill,inner sep = 1pt] (E) at (0,6) {};
\draw (A) -- (B) -- (C) -- (D) -- (E) -- (A) -- (C) -- (E);

\draw [-{Latex[length=1.5mm]}] (2.5,4.5) node[right]{$f_4(x,y) = 3x+y-6$} to [bend left=0] (1.1,3.3);
\draw [-{Latex[length=1.5mm]}] (3.5,3.5) node[right]{$f_4(x,y) = 6-x-y$} to [bend right=0] (2.7,2.5);
\draw [-{Latex[length=1.5mm]}] (4.5,2.5) node[right]{$f_4(x,y) = x+3y-6$} to [bend right=0] (3.3,1.1);
\end{tikzpicture}\]
\end{example}

In the above example, one can see that the $q,t$-Catalan measure is symmetric about the line $y=x$. In other words, the density function satisfies $f_4(x,y)=f_4(y,x)$. This symmetry holds for all $n$, and is not apparent from the combinatorial definitions. In Corollary \ref{cor:symmetry}, we deduce this symmetry from the joint symmetry of the higher $q,t$-Catalans expressed in Theorem \ref{thm:sym}. \\

An interesting problem would be to directly show that the $q,t$-Catalan measures are symmetric without using the corresponding result for the higher $q,t$-Catalan numbers. One could hope to find a measure preserving involution on $\mathcal{D}^{(\cont)}_n$ that switches area and dinv, or area and bounce, for example. So far, we have not been able to find such a map. 

\subsection{A Measure Preserving Transformation on Continuous Dyck Paths}\label{sec:T}

Let us now define the map $T:\mathcal{D}^{(\cont)}_n\to \mathcal{D}^{(\cont)}_n$ used in the previous section to show the equivalence between the two definitions of the $q,t$-Catalan measures.\\

For a continuous Dyck path $D$, define $T(D)$ to be the unique path whose bounce vector $(b_0(T(D)),\dots,b_{n-1}(T(D)))$ is equal to the area vector of $D$ $(a_0(D),\dots,a_{n-1}(D))$ up to a permutation of the coordinates. In other words, sort the area vector of $D$ in to weakly increasing order, and declare that $T(D)$ is the path whose bounce vector is equal to the sorted area vector of $D$.

\begin{lemma}\label{lemma:Tstats}
For all $D\in \mathcal D^{(\cont)}_n$, we have $\dinv(D) = \area(T(D))$, and $\area(D) = \bounce(T(D))$.
\end{lemma}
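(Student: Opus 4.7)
The plan is to reduce both identities to Proposition \ref{area vs bounce}, exploiting the fact that $\sc(x) = \max\{1-|x|,0\}$ is an even function of $x$.

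First I would check that $T$ is actually well-defined, i.e.\ that the sorted area vector of $D$ lies in the bounce polytope $B_n$. Write $(s_0,\ldots,s_{n-1})$ for the weakly-increasing rearrangement of $(a_0(D),\ldots,a_{n-1}(D))$. Since $a_0(D)=0$ and all $a_i(D)\geq 0$, we get $s_0=0$. The only remaining condition is $s_{i+1}-s_i\leq 1$. If there were a $k$ with $s_{k+1}-s_k>1$, then the original sequence would have some entries $\leq s_k$ (e.g.\ $a_0=0$) and some entries $\geq s_{k+1}$ (e.g.\ $s_{n-1}$), with no entries in the gap $(s_k,s_{k+1})$. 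Reading $(a_0(D),\ldots,a_{n-1}(D))$ in order, some consecutive pair must cross the gap, contradicting the defining inequality $a_{j+1}(D)\leq a_j(D)+1$ of $A_n$.

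The identity $\area(D)=\bounce(T(D))$ is then immediate: by construction $\mathbf{bounce}(T(D))$ is a permutation of $\mathbf{area}(D)$, and both statistics are the sum of the entries of their respective vectors.

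For $\dinv(D) = \area(T(D))$, I would apply Proposition \ref{area vs bounce} to $T(D)$, writing $b_i := b_i(T(D))$:
\[
\area(T(D)) = \sum_{j=0}^{n-1} a_j(T(D)) = \sum_{0\leq i<j\leq n-1}\sc(b_j-b_i).
\]
Since the $b_j$ are weakly increasing, $b_j-b_i\geq 0$ for $i<j$, so $\sc(b_j-b_i)=\sc(|b_j-b_i|)$. The right-hand side depends only on the multiset $\{b_0,\ldots,b_{n-1}\}$, and by construction of $T$ this multiset equals $\{a_0(D),\ldots,a_{n-1}(D)\}$. Therefore
\[
\sum_{i<j}\sc(|b_j-b_i|) = \sum_{i<j}\sc(|a_j(D)-a_i(D)|) = \sum_{i<j}\sc(a_i(D)-a_j(D)) = \dinv(D),
\]
where the middle equality uses that $\sc$ is even.

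The only genuinely non-formal step is the well-definedness check, which is the "main obstacle" but is itself quick. Once $T$ is known to map into $\mathcal{D}^{(\cont)}_n$, both statistic identities follow by combining Proposition \ref{area vs bounce} with evenness of $\sc$ and invariance of the relevant sums under permutation of the coordinates.
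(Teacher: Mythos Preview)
Your proof is correct and follows essentially the same route as the paper: both identities are reduced to Proposition~\ref{area vs bounce} together with the evenness of $\sc$ and the permutation-invariance of the sums. The only difference is that you include an explicit verification that the sorted area vector lies in $B_n$ (so that $T$ is well-defined), which the paper leaves implicit in its definition of $T$.
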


For example, let us return to the path $D$ depicted in Figure \ref{fig:coords} which has area vector $(0,0.6,1.2,0.5)$. The area of $D$ is 2.3, and we computed in Section \ref{sec:contstats} the statistic $\dinv(D)=2.5$. By definition, $T(D)$ is the path whose bounce vector is $(0,0.5,0.6,1.2)$, and Proposition \ref{area vs bounce} allows us to compute the area vector of $T(D)$ to be $(0,0.5,1.3,0.7)$. We therefore have $\area(T(D)) = 2.5 = \dinv(D)$ and $\bounce(T(D)) = 2.3 = \area(D)$, as claimed.

\begin{proof}
The area vector of $D$ and the bounce vector of $T(D)$ are the same up to a permutation, so 
\[ \area(D) = \sum_{i=0}^{n-1} a_i(D) = \sum_{i=0}^{n-1} b_i(T(D)) = \bounce(T(D)). \]
For the other claim we use Proposition \ref{area vs bounce} which expresses the coordinates of the area vector in terms of the bounce vector. Expanding the terms $a_j(T(D))$ using those formulas, we obtain
\[ \area(T(D)) = \sum_{j=0}^{n-1} a_j(T(D)) = \sum_{0\leq i<j\leq n-1}\sc(b_j(T(D))-b_i(T(D))). \]
Since $\sc(x) = \max\{ 1-|x|,0\}$ is even, the function $(v_0,\dots,v_{n-1})\mapsto \sum_{i<j}\sc(v_j-v_i)$ does not depend on the order of the coordinates of the input. The bounce vector of $T(D)$ and the area vector of $D$ are the same up to a permutation, so the previous sum is equal to
\[  \sum_{0\leq i<j\leq n-1}\sc(a_j(D)-a_i(D)) = \dinv(D) \]
as claimed.
\end{proof}

The map $T$ is surjective but not injective, and in general $|T^{-1}(D)|$ depends on $D$. Despite this apparent complication, we have the following result.

\begin{proposition}\label{prop:Tmeasure}
The map $T:\mathcal D^{(\cont)}_n\to \mathcal D^{(\cont)}_n$ is measure preserving, in the sense that $\lambda_n(U) = \lambda_n(T^{-1}U)$ for all measureable sets $U\subseteq \mathcal D^{(\cont)}_n$.
\end{proposition}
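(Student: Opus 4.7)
The plan is to factor $T$ through the bounce polytope. Identifying $\mathcal{D}^{(\cont)}_n$ with $A_n$ via the area vector, $T$ becomes the composition $\Psi \circ \Pi$, where $\Pi \colon A_n \to B_n$ is the sort map $\mathbf{a} \mapsto \sort(\mathbf{a})$ and $\Psi \colon B_n \to A_n$ is the piecewise-linear bounce-to-area bijection of Proposition \ref{area vs bounce}. To conclude that $T_* \lambda_n = \lambda_n$, it suffices to show that the density of $\Pi_* \lambda_n$ on $B_n$ equals the absolute Jacobian of $\Psi$ at every point; the change-of-variables formula will then give the result.

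For the pushforward by $\Pi$, I would partition $A_n$ (away from the measure-zero ties locus) into regions $A_n^\sigma = \{\mathbf{a} \in A_n : \sigma \text{ sorts } \mathbf{a}\}$ indexed by permutations $\sigma$. On each $A_n^\sigma$ the map $\Pi$ is a coordinate permutation, so it preserves Lebesgue measure; summing over $\sigma$ yields $\Pi_* \lambda_n = \nu(\mathbf{b}) \, d\mathbf{b}$, where $\nu(\mathbf{b})$ is the number of permutations of the entries of $\mathbf{b}$ that form a vector in $A_n$. The crux of the proof is the combinatorial identity
\[
\nu(\mathbf{b}) = \prod_{j=1}^{n-1} |S_j(\mathbf{b})|, \qquad S_j(\mathbf{b}) := \{0 \leq i < j : b_j - b_i \leq 1\},
\]
which I would establish by an insertion argument. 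Enumerate the valid orderings $(v_0, v_1, \ldots, v_{n-1})$ of the entries of $\mathbf{b}$ (satisfying $v_0 = 0$ and $v_{i+1} - v_i \leq 1$) by inserting $b_1, b_2, \ldots, b_{n-1}$ one at a time into a growing sequence that starts with $b_0$. Because $b_k$ is the running maximum when it is inserted, the right-neighbor constraint $v - b_k \leq 1$ is automatic; the $k$ admissible insertion slots yield left-neighbors ranging exactly over the already-placed entries $\{b_0, \ldots, b_{k-1}\}$, and precisely $|S_k(\mathbf{b})|$ of these satisfy the left-neighbor constraint $b_k - b_i \leq 1$.

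For the Jacobian of $\Psi$, on each region of $B_n$ on which the signs $\sgn(b_j - b_i - 1)$ are constant, one has $\Psi(\mathbf{b})_j = \sum_{i \in S_j(\mathbf{b})}(1 - b_j + b_i)$, an affine function whose Jacobian with respect to the free coordinates $b_1, \ldots, b_{n-1}$ is lower triangular with diagonal entries $-|S_j(\mathbf{b})|$. Hence $|\det D\Psi(\mathbf{b})| = \prod_{j=1}^{n-1} |S_j(\mathbf{b})| = \nu(\mathbf{b})$, and the change-of-variables formula applied piece by piece gives, for any measurable $U \subseteq A_n$,
\[
\lambda_n(T^{-1} U) = \int_{\Psi^{-1}(U)} \nu(\mathbf{b}) \, d\mathbf{b} = \int_U \frac{\nu(\Psi^{-1}(\mathbf{a}))}{|\det D\Psi(\Psi^{-1}(\mathbf{a}))|} \, d\mathbf{a} = \lambda_n(U).
\]

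The main obstacle is the combinatorial identity $\nu = \prod |S_j|$: matching the multi-valued ``folding'' of $\Pi$ pointwise with the stretching of $\Psi$ is not obvious a priori, and the insertion argument is what makes these two quantities coincide on the nose.
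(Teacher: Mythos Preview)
Your proposal is correct and is essentially the paper's own proof: both factor $T$ as the sort map $A_n\to B_n$ followed by the piecewise-linear bounce-to-area bijection $B_n\to A_n$, compute the Jacobian of the latter as lower triangular with diagonal entries $-|S_j(\mathbf{b})|$, and match this product against the preimage count of sort via the same insertion argument. The only cosmetic difference is that the paper phrases the insertion count as an induction on $n$ (inserting the largest entry $b_{n-1}$ into a valid ordering of $b_0,\dots,b_{n-2}$) rather than building the sequence from left to right, but the combinatorics is identical.
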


\begin{proof}
Consider $A_n$ and $B_n$ as full-dimensional polytopes in $\R^{n-1}$ by dropping the leading zeroes. We also identify $\mathcal D^{(\cont)}_n\leftrightarrow A_n$ so that we may consider $T$ as a function $A_n\to A_n$. This map can then be decomposed as
\[\begin{tikzcd}
A_n \arrow[r,"\sort"] & B_n \arrow[r,"f"] & A_n,
\end{tikzcd}\]
where the first map sorts vectors $(a_1,\dots,a_{n-1})\in A_n$ into weakly increasing order, and $f$ sends the bounce vector of a path $D$ to the area vector of the same path $D$. Proposition \ref{area vs bounce} provides an expression of the vector $(a_1,\dots,a_{n-1}) = f(b_1,\dots,b_{n-1})$ in terms of the function $\sc(x) = \max\{1-|x|,0\}$, namely
\[ a_j =  \sum_{i=0}^{j-1} \sc(b_j-b_i), \]
for all $j=1,\dots,n-1$. In particular, this description shows that $f$ is piece-wise linear.\\

Let $\mathbf{b} = (b_1,\dots,b_{n-1})\in B_n$. We wish to compute the Jacobian determinant of $f$ at $\mathbf{b}$, so we assume that $f$ is given by a single linear function in a neighborhood of $\mathbf{b}$. Examining the explicit description of $f$ given above, this means that all of the coordinates of $\mathbf{b}$ are distinct and nonzero, and there is no pair of indices $i<j$ such that $b_j = b_i+1$. \\

Claim: The magnitude of the Jacobian determinant of $f$ at $\mathbf{b}$ is equal to $|\sort^{-1}(\mathbf{b})|$, and this common value is precisely
\[ d(\mathbf{b}) = \prod_{j = 1}^{n-1} \#\{ i=0,\dots,j-1\,|\, b_j-b_i<1 \}. \]

First let us see how this claim implies the proposition. Since $f$ is a bijection and $\sort$ has Jacobian determinant $\pm1$, this claim implies that the preimage $T^{-1}(f(\mathbf{b}))$ also consists of $d(\mathbf{b})$ points, and the Jacobian determinant of $T$ is $\pm d(\mathbf{b})$ at all of them. It follows that $T$ preserves the measure of sufficiently small sets containing $f(\mathbf{b})$. But this applies to a dense set of points in $A_n$, which implies that $T$ is globally measure preserving.\\

Proof of Claim: To compute the Jacobian determinant of $f$ at $\mathbf{b}$, we expand the formula for the coordinates of $f(\mathbf{b})$ using the definition of $\sc(x)$ to obtain
\[ a_j = \sum_{\substack{i=0,\dots,j-1\\ b_j-b_i<1}} (1-b_j+b_i), \]
for all $j=1,\dots,n-1$. The $a_j$ coordinate in the expression only depends on $b_1,\dots,b_j$, so the Jacobian of $f$ at $\mathbf{b}$ is lower triangular. Furthermore, for all $j=1,\dots,n-1$ the $j$th entry on the diagonal of the Jacobian is $-\#\{ i=0,\dots,j-1\,|\, b_j-b_i<1 \},$ the coefficient on the $b_j$ term in the expression for $a_j$. The Jacobian determinant of $f$ at $\mathbf{b}$ is the product of these diagonal entries, and therefore has magnitude $d(\mathbf{d})$ as claimed.\\

Finally, we count $|\sort^{-1}(\mathbf{b})|$ by induction on $n$. In the base case $n=2$, $\mathbf{b}$ is a single number $b_1$ and $\sort:[0,1]\to [0,1]$ is the identity map. We therefore have $d(\mathbf{b}) = 1 = |\sort^{-1}(\mathbf{b})|$ as claimed.\\

Now take $\mathbf{b} = (b_1,\dots,b_{n-1})\in B_n$ as above and assume that the claim holds for all smaller $n$. By induction hypothesis, there are exactly 
\[ d(b_1,\dots,b_{n-2}) = \prod_{j = 1}^{n-2} \#\{ i=0,\dots,j-1\,|\, b_j-b_i<1 \} \]
permutations of the coordinates $(b_0,\dots,b_{n-2})$ that lie in $A_{n-1}$. For any such permutation, let us insert $b_{n-1}$ into the permuted vector immediately following some coordinate $b_i$. The resulting vector lies in $A_n$ if and only if $b_{n-1}-b_i<1$. So for each permutation of $(b_1,\dots,b_{n-2})$ in $A_{n-1}$, there are exactly $\#\{i=0,\dots,n-2 \,|\, b_{n-1}-b_i<1\}$ places to insert $b_{n-1}$ that result in a vector lying in $A_n$. Every permutation of $\mathbf{b}$ that lies in $A_n$ can be obtained uniquely in this way, so by induction on $n$ we conclude that $|\sort^{-1}(\mathbf{b})| = d(\mathbf{b})$, completing the proof.
\end{proof}

\section{$m$-Dyck Path Combinatorics as $m\to\infty$}\label{sec:limits}

In this section we relate the combinatorics of continuous Dyck paths to $m$-Dyck paths. We will show that each of the continuous combinatorial objects defined in the previous section is a limit as $m\to\infty$ of its (normalized) $m$-Dyck path counterpart. This property motivated the definitions of the area, dinv, and bounce statistics on continous Dyck paths.\\

We say that a continuous Dyck path $D$ is $\frac1m$\textit{-integral} if each of the horizontal steps in $D$ is an integer multiple of $\frac1m$. Equivalently, $D\in\mathcal D^{(\cont)}_n$ is $\frac1m$-integral if its area vector lies in $A_n\cap \frac1m\Z^n$. The $\frac1m$-integral Dyck paths of height $n$ are in bijection with $m$-Dyck paths of height $n$, with the correspondence given by scaling horizontally by a factor of $m$ (see Figure \ref{fig:scaling} in the introduction).

\subsection{Area and Dinv as $m\to\infty$}

For a $\frac1m$-integral \ Dyck path $D$, we define $\overline{\area}_m(D)$ (resp. $\overline{\dinv}_m(D)$, $\overline{\bounce}_m(D)$) to be $\frac1m$ times the area (resp. dinv, bounce) of the corresponding $m$-Dyck path. We refer to these as the \textit{normalized $m$-area} (resp. \textit{dinv, bounce}) statistics of $D$.

\begin{example}\label{ex}
Let $D$ be the continuous Dyck path of height 3 with area vector $(0,1,1)$. The continuous statistics of $D$ are 
\[ \area(D)=2,\,  \dinv(D) = 1,\,  \bounce(D)=\frac12.\] 
This path $D$ is $\frac1m$-integral for any $m\geq 1$, corresponding to the $m$-Dyck path with area vector $(0,m,m)$. The normalized $m$-statistics of $D$ are
\[ \overline{\area}_m(D)=2,\hspace{2ex} \overline{\dinv}_m(D) = 1+\frac2m ,\hspace{2ex} \overline{\bounce}_m(D) =  
\begin{cases}
\frac 12 & m \text{ even},\\
\frac{m+1}{2m} & m \text{ odd}.
\end{cases} \]
\end{example}

The coincidence of the continuous $\area$ statistic and $\overline{\area}_m$ on $\frac1m$-integral Dyck paths holds in general.

\begin{lemma}\label{lemma:area}
For all $\frac1m$-integral Dyck paths $D$, we have $\area(D) = \overline{\area}_m(D).$
\end{lemma}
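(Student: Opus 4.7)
My plan is to establish the coordinate-wise identity
\[ a_i(D') = m \cdot a_i(D) \quad \text{for all } i = 0, \dots, n-1, \]
where $D' \in \mathcal{D}^{(m)}_n$ is the $m$-Dyck path obtained from the $\frac{1}{m}$-integral continuous path $D$ by horizontal scaling by $m$. Summing over $i$ and dividing by $m$ then immediately yields
$\overline{\area}_m(D) = \frac{1}{m} \sum_i a_i(D') = \sum_i a_i(D) = \area(D).$

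First, I would translate between the two coordinate systems: the $i$-th north step of $D$ sits at $x$-coordinate $x_i(D) = i - a_i(D)$, and under horizontal scaling by $m$ it is sent to the $i$-th north step of $D'$ at $x = m \cdot x_i(D)$. By the $\frac{1}{m}$-integrality hypothesis, this is a genuine integer, confirming that $D'$ is a well-defined $m$-Dyck path of height $n$.

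Next, I would count the unit lattice boxes in row $i$ lying between $D'$ and the line $y = x/m$. In this row the only vertical segment of $D'$ is at $x = m \cdot x_i(D)$, the line enters the row at $(mi, i)$ and exits at $(m(i+1), i+1)$, and the Dyck condition $x_i(D) \le i$ places the vertical segment weakly to the left of the line. A unit box $[k, k+1] \times [i, i+1]$ sits between the path and the line precisely when (i) $k \ge m \cdot x_i(D)$, so the box lies to the right of the path, and (ii) $k + 1 \le m i$, so the box lies entirely above the line $y = x/m$. Condition (ii) is obtained by checking the bottom-right corner $(k+1, i)$: because the line has positive slope $1/m$, the tightest inequality for ``above the line'' is attained there. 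Counting integers $k$ satisfying both conditions gives $mi - m \cdot x_i(D) = m \cdot a_i(D)$, establishing the claim.

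I expect no real obstacle here: the argument is a direct geometric inspection, with the only mild subtlety being the verification that condition (ii) is indeed controlled by the bottom-right corner of each box. The lemma should be viewed as a sanity check confirming that the continuous $\area$ statistic was correctly normalized in Section~\ref{sec:contstats} to match the $m$-Dyck path statistic in the limit.
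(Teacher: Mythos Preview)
Your proof is correct. The paper states this lemma without proof, treating it as immediate from the definitions; your coordinate-wise verification that $a_i(D') = m\cdot a_i(D)$, followed by summing and dividing by $m$, is precisely the natural argument one would supply.
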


Example \ref{ex} shows that the situation for dinv and bounce is necessarily more complicated. In both of these cases, we will see that the normalized $m$-statistics converge uniformly over all $D$ to their continuous counterparts as $m\to\infty$.

\begin{lemma}\label{lemma:dinv}
\[ \lim_{m\to\infty} \max \left\{ \big|\dinv(D)-\overline{\dinv}_m(D)\big|\,\bigg|\, D\in \mathcal{D}^{(\cont)}_n\text{ $\frac1m$-integral} \right\} = 0 \]
\end{lemma}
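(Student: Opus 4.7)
The plan is to compare $\dinv(D)$ and $\overline{\dinv}_m(D)$ term-by-term across the $\binom{n}{2}$ summands and to establish a pointwise discrepancy of at most $1/m$, yielding a uniform bound of $\binom{n}{2}/m$ that tends to zero as $m\to\infty$, independently of $D$.

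First I would rewrite both statistics in a common language. Let $D$ be a $\frac{1}{m}$-integral continuous Dyck path with continuous area vector $(a_0,\dots,a_{n-1}) \in A_n \cap \tfrac{1}{m}\Z^n$. The corresponding $m$-Dyck path has integer area vector $(ma_0,\dots,ma_{n-1})$, so
\[ \overline{\dinv}_m(D) \;=\; \frac{1}{m}\sum_{0\leq i<j\leq n-1}\sc_m\bigl(m(a_i-a_j)\bigr), \]
which should be compared directly with $\dinv(D) = \sum_{i<j}\sc(a_i-a_j)$. It therefore suffices to bound the pointwise discrepancy $\bigl|\tfrac{1}{m}\sc_m(my) - \sc(y)\bigr|$ uniformly for $y = a_i-a_j \in \tfrac{1}{m}\Z$.

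The heart of the argument is a short case analysis on $y = p/m$. For $|p|>m$ both $\sc(y)$ and $\sc_m(p)$ vanish. For $-m\leq p\leq 0$ one computes $\tfrac{1}{m}\sc_m(p) = 1 + p/m = 1 - |y| = \sc(y)$, so the two expressions agree exactly. For $1\leq p\leq m$, however, $\tfrac{1}{m}\sc_m(p) = 1 + \tfrac{1}{m} - \tfrac{p}{m}$ while $\sc(y) = 1 - p/m$, producing a discrepancy of exactly $1/m$. This asymmetry reflects the asymmetry in the very definition of $\sc_m$ (for instance $\sc_m(1) = m$ but $\sc_m(-1) = m-1$). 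In every case the pointwise bound $\bigl|\tfrac{1}{m}\sc_m(my) - \sc(y)\bigr| \leq \tfrac{1}{m}$ holds.

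Summing this bound over the $\binom{n}{2}$ pairs $i<j$ gives
\[ \bigl|\dinv(D)-\overline{\dinv}_m(D)\bigr| \;\leq\; \frac{1}{m}\binom{n}{2}, \]
a bound independent of $D$, which immediately yields the claimed uniform convergence. The step requiring the most care is the case analysis, but it is not deep; the only genuinely subtle feature is the one-sided $1/m$ error coming from the asymmetry of $\sc_m$, which accounts for why an exact equality (as in the forthcoming Lemma~\ref{lemma:area} for area) is not available here and why the discrete and continuous dinv statistics only agree in the limit.
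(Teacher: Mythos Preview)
Your proof is correct and follows essentially the same approach as the paper: rewrite both statistics as sums over pairs $i<j$, bound each term by $1/m$, and conclude with the uniform bound $\binom{n}{2}/m$. In fact your version is more explicit than the paper's, which simply asserts that ``one checks that each pair of corresponding terms in the above sums differs by at most $\frac{1}{m}$''; you have actually carried out that case analysis and correctly identified the source of the discrepancy as the asymmetry of $\sc_m$ at positive versus nonpositive arguments.
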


\begin{proof}
Let $D$ be a $\frac1m$-integral Dyck path with area vector $(a_0(D),\dots,a_{n-1}(D))$, so that the corresponding $m$-Dyck path has area vector $(m a_0(D),\dots,m a_{n-1}(D)).$ We may compute $\overline{\dinv}_m(D)$ by the formula
\[\overline{\dinv}_m(D) =  \sum_{i<j}\frac{\sc_m(ma_i(D)-ma_j(D))}{m}, \]
while 
\[ \dinv(D) = \sum_{i<j} \sc(a_i(D)-a_j(D)). \]
Unwinding the definitions of $\sc_m$ and $\sc$, one checks that each pair of corresponding terms in the above sums differs by at most $\frac1m$. Therefore, 
\[ \big|\dinv(D)-\overline{\dinv}_m(D)\big| \leq \frac1m {n \choose 2}  \] 
for all $\frac1m$-integral $D\in \mathcal{D}^{(\cont)}_n$, which implies the claim.
\end{proof}

\subsection{$m$-Bounce Paths as $m\to\infty$}

Let us define a \textit{normalized $m$-bounce path} associated to any continuous Dyck path $D\in \mathcal{D}^{(m)}_n$. We modify the construction of the bounce parametrization in Section \ref{sec:contstats} to only allow north steps at times $t\in \frac1m\Z$. Let us describe the modified parametrization in more detail: \\

At time $t=0$, the normalized $m$-bounce path begins at the point $(0,0)$ and makes $v_0$ discrete (unit length) north steps so that it ends on a horizontal step of $D$. Then for all times $0<t<\frac1m$ the bounce path moves east continuously at speed $v_0$, covering a total horizontal distance of $\frac{v_0}{m}$. Inductively, assume that the parametrization of the bounce path has been defined for all $t<\frac im$ with north steps $v_0,\dots,v_{i-1}$ at the respective times $t=0,\dots,\frac{i-1}m$. At time $t=\frac im$ the bounce path makes $v_i$ (unit length) north steps so that it ends on a horizontal step of $D$. Then for all times $\frac im<t<\frac{i+1}m$ the bounce path moves east continuously at speed $v_i+\cdots+v_{i-m+1}$, which is the total length of the north steps traveled during the time interval $\frac{i}{m}-1< t \leq i$, covering a total horizontal distance of $\frac1m(v_i+\cdots+v_{i-m+1})$. The bounce path ends when it reaches the point $(n,n)$.\\

If the continuous Dyck path $D$ is $\frac1m$-integral, then the normalized $m$-bounce path of $D$ defined above coincides with the usual $m$-bounce path, after the usual horizontal rescaling by $m$. Indeed, the vertical steps $v_0,v_1,\dots$ in the parametrization above are the same as the vertical steps in the corresponding $m$-bounce path. Meanwhile the horizontal distance covered in the time interval $\frac im<t<\frac{i+1}m$, between the vertical steps $v_i$ and $v_{i+1}$, is exactly $\frac{h_i}m$. \\

Define the \textit{normalized $m$-bounce vector} of a continuous Dyck path $D$ to be \[ \overline{\mathbf{bounce}}_m(D) = (\underbrace{0,\dots,0}_{v_0},\underbrace{\frac 1m,\dots,\frac 1m}_{v_1},\dots,\underbrace{\frac im,\dots,\frac im}_{v_i},\dots)\in \Z^n. \]
In other words, the coordinates $\overline{\mathbf{bounce}}_m(D) = (\overline{b}_0(D),\dots,\overline{b}_{n-1}(D))$ are defined so that $\overline{b}_i(D)$ is the time at which the normalized $m$-bounce path takes its $i$th unit length vertical step.\\

One can show that as $m\to\infty$, these normalized $m$-bounce vectors converge uniformly to the continuous bounce vectors defined in Section \ref{sec:contstats}.

\begin{lemma}
\[ \lim_{m\to\infty} \max \left\{ \big|\mathbf{bounce}(D)-\overline{\mathbf{bounce}}_m(D)\big|\,\bigg|\, D\in \mathcal{D}^{(\cont)}_n \right\} = 0 \]
\end{lemma}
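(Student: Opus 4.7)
The plan is to compare the horizontal position functions of the two bounce parametrizations and then convert position bounds into bounce-coordinate bounds. Let $r_D(t)$ denote the position of the continuous bounce parametrization at time $t$, and $\overline{r}^{(m)}_D(t)$ that of the normalized $m$-bounce parametrization. Observing that each unit-length north step taken at time $\tau$ contributes unit horizontal speed on $(\tau, \tau+1)$ in its parametrization, we obtain the clean expressions
\[
r_D(t) = \sum_{j=0}^{n-1} h(t - b_j(D)), \qquad \overline{r}^{(m)}_D(t) = \sum_{j=0}^{n-1} h(t - \overline{b}_j(D)),
\]
where $h(s) = \min(\max(s,0), 1)$.

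I would then induct on $i$ to prove two statements simultaneously: (a) $\overline{b}_i(D) \geq b_i(D)$, and (b) $\overline{b}_i(D) - b_i(D) \leq C_i/m$ for constants $C_i$ depending only on $n$ and $i$ (hence uniform in $D$). The base case is immediate. For the induction step, let $s_i$ be the first time $\overline{r}^{(m)}_D$ reaches $x_i(D)$; by the batching rule of the normalized bounce path, $\overline{b}_i \in [s_i, s_i + 1/m)$. The $j \geq i$ terms of the position formulas vanish at $t = b_i$ (since $\overline{b}_j \geq b_j \geq b_i$ by the induction hypothesis and monotonicity of bounce vectors), while each $j < i$ term is controlled by $\overline{b}_j - b_j \leq C_{i-1}/m$ via the $1$-Lipschitz property of $h$, giving
\[
x_i - \overline{r}^{(m)}_D(b_i) \;=\; r_D(b_i) - \overline{r}^{(m)}_D(b_i) \;\leq\; (n-1)\, C_{i-1}/m.
\]
Provided the normalized speed is at least $1$ throughout $[b_i, s_i]$, the path covers this $O(1/m)$ distance in time at most $(n-1) C_{i-1}/m$, yielding $\overline{b}_i - b_i \leq (n-1) C_{i-1}/m + 1/m$ and closing the induction with $C_i = (n-1) C_{i-1} + 1 = O(1)$.

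The main obstacle is justifying the speed lower bound. The continuous parametrization satisfies a no-stalling property $b_i \leq b_{i-1} + 1$, since otherwise $r_D$ would be constant on $(b_{i-1} + 1, b_i)$ (no $b_j$ in its window there) and fail to reach $x_i$. Combined with $\overline{b}_{i-1} \geq b_{i-1}$ from (a), this gives $b_i \leq \overline{b}_{i-1} + 1$. The unit north step at $\overline{b}_{i-1}$ contributes speed $1$ to the normalized parametrization throughout $(\overline{b}_{i-1}, \overline{b}_{i-1} + 1)$, so the generic case $b_i \geq \overline{b}_{i-1}$ places $[b_i, s_i]$ in a region of guaranteed positive speed. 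The edge case $b_i < \overline{b}_{i-1}$ has the redeeming feature that $\overline{b}_{i-1} - b_i \leq C_{i-1}/m$ is already tiny (since $b_i \geq b_{i-1}$); here a separate direct comparison of normalized and continuous speeds on the shifted intervals $[\overline{b}_{i-1}, s_i]$ versus $[b_{i-1}, b_i]$, whose windows contain nearly the same bounce times by the induction hypothesis, yields $s_i - \overline{b}_{i-1} = (b_i - b_{i-1}) + O(1/m)$ and again the desired bound. Uniform convergence in $D$ is automatic since the constants $C_i$ depend only on $n$.
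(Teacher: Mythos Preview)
The paper states this lemma without proof, so there is no argument to compare against; I will evaluate your sketch on its own.

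Your overall strategy is sound and would yield a correct proof. The representation
\[
r_D(t)=\sum_{j}h(t-b_j),\qquad \overline r^{(m)}_D(t)=\sum_j h(t-\overline b_j),\qquad h(s)=\min(\max(s,0),1),
\]
is exactly right, and the inductive scheme (prove $\overline b_i\ge b_i$ and $\overline b_i-b_i\le C_i/m$ simultaneously) is the natural one. The Lipschitz estimate giving $x_i-\overline r^{(m)}_D(b_i)\le (n-1)C_{i-1}/m$ is correct, and so is the observation $\overline b_i\in[s_i,s_i+1/m)$.

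Two places deserve tightening. First, when you say the $j\ge i$ terms vanish at $t=b_i$ ``since $\overline b_j\ge b_j\ge b_i$ by the induction hypothesis,'' the induction hypothesis only covers $j<i$. What you actually need is $\overline b_j\ge \overline b_i\ge b_i$ (monotonicity of the normalized bounce vector together with part (a) at index $i$, which you should establish \emph{first} in the inductive step). A clean proof of (a) is: if $\overline b_i<b_i$ then, using $\overline b_k\ge b_k$ for $k<i$ and monotonicity of $h$,
\[
\overline r^{(m)}_D(\overline b_i)=\sum_{k<i}h(\overline b_i-\overline b_k)\le\sum_{k<i}h(\overline b_i-b_k)=r_D(\overline b_i)<r_D(b_i)=x_i,
\]
the strict inequality coming from the no-stalling bound $b_i\le b_{i-1}+1$, which forces $r_D$ to be strictly increasing on $[b_{i-1},b_i]\ni\overline b_i$.

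Second, the edge case $b_i<\overline b_{i-1}$ is handled too loosely. A cleaner way to close it, uniform in the subcases, is to note that regardless of whether $b_i\ge\overline b_{i-1}$ or not, one always has $s_i\le\overline b_{i-1}+1$ and the speed of $\tilde r_i:=\sum_{k<i}h(\,\cdot\,-\overline b_k)$ is at least $1$ on $[\overline b_{i-1},\overline b_{i-1}+1]$. Since $\tilde r_i$ is nondecreasing and $\tilde r_i(\max(b_i,\overline b_{i-1}))\ge \tilde r_i(b_i)\ge x_i-(n-1)C_{i-1}/m$, the time needed from $\max(b_i,\overline b_{i-1})$ to reach $x_i$ is at most $(n-1)C_{i-1}/m$. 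Together with $\max(b_i,\overline b_{i-1})-b_i\le C_{i-1}/m$ and the rounding error $1/m$, this gives $\overline b_i-b_i\le nC_{i-1}/m+1/m$, and you may take $C_i=nC_{i-1}+1$. This avoids the somewhat vague ``shifted interval'' comparison you describe.

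With these two clarifications your argument is complete and the constants depend only on $n$, giving the claimed uniform convergence.
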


It is for this reason that we think of the bounce parametrization and bounce vector as the continuous analogue of the bounce vector for $m$-Dyck paths. When $D$ is $\frac1m$-integral, the sum of the coordinates of $\overline{\mathbf{bounce}}_m(D)$ coincides with the normalized bounce statistic $\overline{\bounce}_m(D)$. As a corollary, we obtain a similar result for the bounce statistics:
\begin{lemma}\label{lemma:bounce}
\[ \lim_{m\to\infty} \max \left\{ \big|\bounce(D)-\overline{\bounce}_m(D)\big|\,\bigg|\, D\in \mathcal{D}^{(\cont)}_n\text{ $\frac1m$-integral} \right\} = 0 \]
\end{lemma}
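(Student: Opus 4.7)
The plan is to deduce this as a direct corollary of the preceding uniform convergence lemma for the bounce vectors, using the triangle inequality. The key observation, already noted in the paragraph just before the statement, is that whenever both are defined, both $\bounce(D)$ and $\overline{\bounce}_m(D)$ arise as the sum of the coordinates of the corresponding (normalized) bounce vector.

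First, I will verify the sum identity $\overline{\bounce}_m(D) = \sum_{i=0}^{n-1}\overline{b}_i(D)$ for every $\frac{1}{m}$-integral $D \in \mathcal{D}^{(\cont)}_n$. By the explicit description of $\overline{\mathbf{bounce}}_m(D)$, which lists $v_i$ copies of $i/m$ for each $i \geq 0$, the sum of its coordinates equals $\sum_{i\geq 0}(i/m)v_i = \tfrac{1}{m}\bounce_m(D') = \overline{\bounce}_m(D)$, where $D'$ is the $m$-Dyck path corresponding to $D$. Combined with $\bounce(D) = \sum_{i=0}^{n-1}b_i(D)$ from the definition, this gives
\[ \bounce(D) - \overline{\bounce}_m(D) = \sum_{i=0}^{n-1}\bigl(b_i(D) - \overline{b}_i(D)\bigr). \]

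Second, I will apply the triangle inequality together with the fact that all norms on $\R^n$ are equivalent. Choosing any vector norm $|\cdot|$ on $\R^n$, there is a constant $C_n$ depending only on $n$ such that
\[ \big|\bounce(D) - \overline{\bounce}_m(D)\big| \;\leq\; \sum_{i=0}^{n-1}\big|b_i(D) - \overline{b}_i(D)\big| \;\leq\; C_n\,\big|\mathbf{bounce}(D) - \overline{\mathbf{bounce}}_m(D)\big|. \]
Taking the maximum of both sides over the set of $\frac{1}{m}$-integral paths, which is a subset of $\mathcal{D}^{(\cont)}_n$, and invoking the preceding lemma to send $m\to\infty$ then forces the left-hand maximum to vanish, as claimed.

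The main obstacle is not this corollary itself but the preceding lemma on uniform vector convergence that it relies upon. Establishing that statement requires an inductive analysis showing that the $O(1/m)$ time-discretization error at each unit north step of the normalized $m$-bounce parametrization does not compound catastrophically through the recursive definition of $\overline{b}_i(D)$, and that the bound obtained is uniform over all of $\mathcal{D}^{(\cont)}_n$. Once that is in place, the argument above is entirely routine and in fact yields the quantitative estimate $|\bounce(D) - \overline{\bounce}_m(D)| = O(n/m)$ uniformly in $D$.
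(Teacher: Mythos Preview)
Your proposal is correct and follows exactly the route the paper takes: the paper states this lemma as an immediate corollary of the preceding uniform convergence of bounce vectors, via the observation that both $\bounce(D)$ and $\overline{\bounce}_m(D)$ are the coordinate sums of their respective vectors. You have simply spelled out the triangle-inequality step that the paper leaves implicit.
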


\subsection{The Bijections $\phi_m:\mathcal{D}^{(m)}_n\to\mathcal{D}^{(m)}_n$ as $m\to\infty$}

We briefly mention the limit interpretation of the map $T:\mathcal{D}^{(\cont)}_n\to\mathcal{D}^{(\cont)}_n$. In Section \ref{sec:DyckPaths} we reviewed the definition of the bijection $\phi_m:\mathcal{D}^{(m)}_n\to\mathcal{D}^{(m)}_n$, which is used to show the equivalence between the combinatorial definitions of the higher $q,t$-Catalan numbers. We claim that the map $T:\mathcal{D}^{(\cont)}_n\to\mathcal{D}^{(\cont)}_n$ defined in Section \ref{sec:T} is the uniform limit as $m\to\infty$ of the $\phi_m$'s.\\

Consider $T$ as a map $A_n\to A_n$ and $\phi_m$ as a bijection on the $\frac1m$-integral points of $A_n$. We claim that 
\[ \lim_{m\to\infty} \max \left\{ \big|T(\mathbf{a})-\phi_m(\mathbf{a})\big|\,\bigg|\, \mathbf{a}\in A_n\cap \frac1m\Z^n \right\} = 0. \]

The key observation is that for a $\frac1m$-integral path $D$, $\phi_m(D)$ is a path whose normalized $m$-bounce vector is equal to the sorted area vector of $D$. On the other hand, $T(D)$ is the path whose continuous bounce vector is equal to the sorted area vector of $D$. As $m\to\infty$, the normalized $m$-bounce paths of $\phi_m(D)$ converge to $T(D)$. Furthermore the maximum difference between a continuous Dyck paths and their normalized $m$-bounce path goes to zero as $m\to\infty$. These facts allow one to show the limit claim above.

\subsection{Higher $q,t$-Catalan Numbers as $m\to\infty$}\label{sec:proof}

The following theorem is the main result of this paper, realizing the $q,t$-Catalan measure $\mu_n$ as a limit of higher $q,t$-Catalan numbers. We use the dinv/area definitions of the higher $q,t$-Catalan numbers and $q,t$-Catalan measures. 

\begin{theorem}\label{thm:main}
For all $n\geq 1,$ the $q,t$-Catalan measure $\mu_n$ is equal to the weak limit
\begin{equation} \lim_{m\to\infty}\left(\frac1{m^{n-1}}\sum_{D\in\mathcal D^{(m)}_n}\delta_{\left(\frac{\dinv_m(D)}{m},\frac{\area_m(D)}{m}\right)}\right). \end{equation}
\end{theorem}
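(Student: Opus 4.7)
The plan is to verify the weak convergence directly by testing against an arbitrary continuous compactly supported function $\varphi:\R^2\to\R$. That is, we must show
\[ \lim_{m\to\infty} \frac{1}{m^{n-1}}\sum_{D\in\mathcal{D}^{(m)}_n}\varphi\!\left(\tfrac{\dinv_m(D)}{m},\tfrac{\area_m(D)}{m}\right) \;=\; \int_{\R^2} \varphi\, d\mu_n. \]
Throughout, we use the horizontal rescaling bijection between $\mathcal{D}^{(m)}_n$ and the set of $\tfrac1m$-integral continuous Dyck paths, and identify $\mathcal{D}^{(\cont)}_n$ with $A_n\subset\R^{n-1}$ via area vectors. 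Under this identification, $\tfrac1m$-integral paths correspond to $A_n\cap \tfrac1m\Z^{n-1}$.

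The first step is to convert discrete statistics to continuous ones on $A_n$. By Lemma \ref{lemma:area} we have $\area_m(D)/m = \area(D)$ exactly for every $\tfrac1m$-integral $D$, while Lemma \ref{lemma:dinv} gives the uniform bound $|\dinv_m(D)/m - \dinv(D)| = |\overline{\dinv}_m(D)-\dinv(D)| \le \binom{n}{2}/m$. Since $\varphi$ has compact support, it is uniformly continuous on $\R^2$; writing $\omega_\varphi$ for its modulus of continuity, each summand differs from $\varphi(\dinv(D),\area(D))$ by at most $\omega_\varphi(\binom{n}{2}/m)$. Since the number of summands is $C^{(m)}_n = O(m^{n-1})$, multiplying by the prefactor $1/m^{n-1}$ shows that the total error is bounded by a constant times $\omega_\varphi(\binom{n}{2}/m)$, which tends to $0$. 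Hence it suffices to prove
\[ \lim_{m\to\infty}\frac{1}{m^{n-1}}\sum_{\mathbf{a}\in A_n\cap \tfrac1m\Z^{n-1}}\varphi(\dinv(\mathbf{a}),\area(\mathbf{a})) \;=\; \int_{A_n}\varphi\circ(\dinv\times\area)\, d\lambda_n. \]

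The second step is to recognize the left-hand side as a Riemann sum. The map $\dinv\times\area\colon A_n\to\R^2$ is continuous (being piecewise linear with agreeing values on the closures of adjacent regions), so $\psi := \varphi\circ(\dinv\times\area)$ is continuous on the compact set $A_n$. For any continuous function on a compact polytope, the Riemann sum over a lattice of mesh $1/m$, weighted by the cell volume $1/m^{n-1}$, converges to the Lebesgue integral; the standard argument bounds the discrepancy between $\int_{A_n}\psi\,d\lambda_n$ and the lattice sum by the oscillation of $\psi$ on cubes of side $1/m$ plus a boundary correction of order $1/m$ times the $(n-2)$-dimensional surface area of $\partial A_n$, both of which vanish. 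Finally, by the definition of the pushforward measure $\mu_n = (\dinv\times\area)_*\lambda_n$, we have $\int_{A_n}\psi\,d\lambda_n = \int_{\R^2}\varphi\,d\mu_n$, completing the proof.

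The main technical point is the Riemann sum convergence, since the integrand is only piecewise smooth (not $C^1$) and $A_n$ has a polyhedral rather than smooth boundary; however, continuity of $\psi$ on a compact polytope is enough to make the standard uniform continuity estimate work, so no genuinely hard analysis arises. The crux of the argument is really the compatibility between the combinatorial statistics at level $m$ and their continuous counterparts, which has been pre-packaged into Lemmas \ref{lemma:area} and \ref{lemma:dinv}.
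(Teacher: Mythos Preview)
Your proof is correct and follows essentially the same approach as the paper: reduce to a compactly supported test function, use Lemmas~\ref{lemma:area} and~\ref{lemma:dinv} together with uniform continuity to replace the normalized $m$-statistics by their continuous counterparts, and then recognize the resulting expression as a Riemann sum for $\int_{A_n}\varphi\circ(\dinv\times\area)\,d\lambda_n=\int\varphi\,d\mu_n$. The paper's writeup differs only cosmetically (it starts from a bounded continuous $h$ and then restricts to compact support using the uniform boundedness of the supports of all measures involved).
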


\begin{proof}
Fix $n\geq 1$, and let $h:\R^2\to\R$ be a bounded continuous function. We will show that the integrals of $h$ against the sequence of discrete measures in the theorem statement converge to $\int h\, \mathrm{d}\mu_n$. All of the relevant measures are supported in the convex set $[0,{n-1\choose 2}]^2\subseteq \R^2$, so we may assume without loss of generality that $h$ is compactly supported. In particular, we take $h$ to be uniformly continuous. \\

We identify $\mathcal{D}_n^{(\cont)}\leftrightarrow A_n$, and consider $A_n\subseteq \{0\}\times\R^{n-1}$ as a full-dimensional polytope in $\R^{n-1}$. Under this identification, we define $\pi$ to be the map $(\dinv,\area):A_n\to \R^2$. Similarly, let $\pi_m$ be the map $(\overline{\dinv}_m,\overline{\area}_m):A_n\cap \frac1m\Z^{n-1}\to \R^2$. By the correspondence $\mathcal{D}_n^{(m)}\leftrightarrow A_n\cap \frac1m \Z^{n-1}$ and the definition of the normalized statistics $\overline{\dinv}_m,\overline{\area}_m$, the discrete measures in the theorem statement can be rewritten as
\[ \frac1{m^{n-1}}\sum_{D\in\mathcal D^{(m)}_n}\delta_{\left(\frac{\dinv_m(D)}{m},\frac{\area_m(D)}{m}\right)} = \frac{1}{m^{n-1}}\sum_{\mathbf{a}\in A_n\cap \frac1m\Z^{n-1}} \delta_{\pi_m(\mathbf{a})}. \]
Integrating $h$ against these measures, we obtain
\[ \int h \, \mathrm{d}\left( \frac{1}{m^{n-1}}\sum_{\mathbf{a}\in A_n\cap \frac1m\Z^{n-1}} \delta_{\pi_m(\mathbf{a})} \right) = \sum_{\mathbf{a}\in A_n\cap \frac1m\Z^{n-1}} \frac{h(\pi_m(\mathbf{a}))}{m^{n-1}}. \]

By Lemmas \ref{lemma:area} and \ref{lemma:dinv}, the functions $\pi_m$ converge uniformly to $\pi$ as $m\to \infty$, in the sense that 
\[ \lim_{m\to\infty} \max \left\{ \big| \pi_m(\mathbf{a})-\pi(\mathbf{a})\big| \,\bigg|\, \mathbf{a}\in A_n\cap \frac1m\Z^{n-1} \right\} = 0. \]
Our assumption of uniform continuity of $h$ allows us to conclude that
\[ \lim_{m\to\infty} \max \left\{ \big| h(\pi_m(\mathbf{a}))-h(\pi(\mathbf{a}))\big| \,\bigg|\, \mathbf{a}\in A_n\cap \frac1m\Z^{n-1} \right\} = 0. \]
Since the number of points $\mathbf{a}\in A_n\cap \frac1m\Z^{n-1}$ is a polynomial of degree $n-1$ in $m$, the sums
\[  \sum_{\mathbf{a}\in A_n\cap \frac1m\Z^{n-1}} \frac{h(\pi_m(\mathbf{a}))-h(\pi(\mathbf{a}))}{m^{n-1}} = \sum_{\mathbf{a}\in A_n\cap \frac1m\Z^{n-1}} \frac{h(\pi_m(\mathbf{a}))}{m^{n-1}} \,- \sum_{\mathbf{a}\in A_n\cap \frac1m\Z^{n-1}} \frac{h(\pi(\mathbf{a}))}{m^{n-1}} \]
also converge to zero as $m\to\infty$. The final sum above is a Riemann sum approximation of the integral $\int (h\circ\pi)\,\mathrm{d}\lambda_n$, where $\lambda_n$ is the Lebesgue measure restricted to the full-dimensional polytope $A_n\subseteq \R^{n-1}$. Since $h\circ\pi$ is continuous, these Riemann sums converge to the integral $\int(h\circ \pi)\,\mathrm{d}\lambda_n = \int h \,\mathrm{d} (\pi_*\lambda_n) = \int h\, \mathrm{d} \mu_n$ as $m\to\infty$. The integrals of $h$ against the discrete measures in the statement therefore converge to this integral as well, which completes the proof.
\end{proof}

Since the higher $q,t$-Catalan numbers are known to be symmetric, the discrete measures in the limit appearing in Theorem \ref{thm:main} are symmetric. This implies the following symmetry property for $\mu_n$.

\begin{corollary}\label{cor:symmetry}
For all $n\geq 1$ the $q,t$-Catalan measure $\mu_n$ is symmetric about the line $y=x$.
\end{corollary}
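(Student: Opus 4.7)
The plan is to inherit the symmetry of $\mu_n$ from the joint symmetry of the higher $q,t$-Catalan numbers (Theorem \ref{thm:sym}) by transferring it across the weak limit provided by Theorem \ref{thm:main}. Let $\sigma:\R^2\to\R^2$ denote the reflection $(x,y)\mapsto(y,x)$; the goal is to show $\sigma_*\mu_n=\mu_n$.

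First I would reinterpret Theorem \ref{thm:sym} in measure-theoretic language. Set
\[ \nu_m := \frac{1}{m^{n-1}}\sum_{D\in\mathcal D^{(m)}_n}\delta_{\left(\frac{\dinv_m(D)}{m},\frac{\area_m(D)}{m}\right)}. \]
The identity $C^{(m)}_n(q,t)=C^{(m)}_n(t,q)$ says that for every $(i,j)\in\Z^2$ the number of $D\in\mathcal D^{(m)}_n$ with $(\dinv_m(D),\area_m(D))=(i,j)$ equals the number with $(\dinv_m(D),\area_m(D))=(j,i)$. Since $\sigma$ commutes with the uniform horizontal and vertical rescaling by $1/m$ and the total-mass rescaling by $1/m^{n-1}$, this is precisely the statement $\sigma_*\nu_m=\nu_m$ for every $m\geq 1$.

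Next, I would observe that pushforward by a homeomorphism of $\R^2$ is continuous with respect to weak convergence of finite Borel measures. Concretely, for any bounded continuous $h:\R^2\to\R$ the composition $h\circ\sigma$ is also bounded and continuous, so by Theorem \ref{thm:main},
\[ \int h\,\mathrm{d}(\sigma_*\nu_m) = \int (h\circ\sigma)\,\mathrm{d}\nu_m \xrightarrow{m\to\infty} \int(h\circ\sigma)\,\mathrm{d}\mu_n = \int h\,\mathrm{d}(\sigma_*\mu_n). \]
Thus $\sigma_*\nu_m\to\sigma_*\mu_n$ weakly. Combined with $\sigma_*\nu_m=\nu_m\to\mu_n$ and the uniqueness of weak limits, this yields $\sigma_*\mu_n=\mu_n$, which is exactly the desired symmetry.

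There is no real obstacle here; once Theorems \ref{thm:sym} and \ref{thm:main} are available, the corollary is formal. The only technical point worth isolating is that weak convergence is preserved under pushforward by a homeomorphism, but this is immediate from the change-of-variables identity for integrals applied to bounded continuous test functions.
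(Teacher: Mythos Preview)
Your proposal is correct and follows exactly the approach the paper takes: the discrete measures $\nu_m$ are symmetric by Theorem~\ref{thm:sym}, and this symmetry passes to the weak limit $\mu_n$ given by Theorem~\ref{thm:main}. You have simply spelled out in detail (via the pushforward-by-$\sigma$ formulation and uniqueness of weak limits) what the paper states in a single sentence before the corollary.
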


\section{Connection to Hilbert Schemes}\label{sec:geometric}

\subsection{Hilbert Schemes and Higher $q,t$-Catalan Numbers}

We first reproduce the geometric setup for the $q,t$-Catalan numbers described by Haiman \cite{Hai}.\\

Let $\H^n$ denote the Hilbert scheme of $n$ points in $\C^2$. This Hilbert scheme is a smooth irreducible variety of dimension $2n$ that parametrizes length $n$ subschemes of $\C^2$. Let $\H^n_0\subseteq \H^n$ denote the (reduced) subscheme parametrizing subschemes supported at the origin in $\C^2$. The punctual Hilbert scheme, $\H^n_0$, is an irreducible and reduced variety of dimension $n-1$. Equivalently, $\H^n$ parametrizes ideals $I\subseteq \C[x,y]$ such that $\C[x,y]/I$ is an $n$-dimensional complex vector space, and $\H^n_0$ parametrizes such ideals $I$ satisfying the additional constraint that $\sqrt{I}=(x,y)\subseteq \C[x,y]$.\\

There is a two-dimensional torus, $T=(\C^*)^2$, that acts naturally on $\H^n$. The action moves the support of the subschemes around by scaling coordinates by the corresponding coordinates of $(\C^*)^2$, but some care must be taken when the subschemes are nonreduced. The punctual Hilbert scheme is set theoretically fixed by this torus action, so the $T$ action can be restricted to $\H^n_0$. \\

Haiman showed that $\H^n$ admits the explicit description $\H^n\simeq \Proj \bigoplus_{m\geq 0}A^m$, where $A\subseteq \C[x_1,y_1,\dots,x_n,y_n]$ is the set of alternating polynomials under the symmetric group action permuting the variables in blocks $(x_i,y_i)\leftrightarrow (x_j,y_j)$. This identification equips $\H^n$ with an ample line bundle. We denote the restriction of this line bundle to $\H^n_0$ by $\Oo(1)$, and its tensor powers by $\Oo(m)$ for all $m\geq1$ .\\

The torus action on $\H^n_0$ extends to a compatible action on the line bundles $\Oo(m)$. This induces a linear action of $T$ on the spaces of global sections $H^0(\H^n_0,\Oo(m))$. Linear torus actions split into direct sums of characters, so we may consider $H^0(\H^n_0,\Oo(m))$ as a bigraded vector space. Let $H^0(\H^n_0,\Oo(m))_{u,v}$ denote the component corresponding to the character $(u,v)\in \Z^2$. The \textit{geometric higher $q,t$-Catalan number} $GC^{(m)}_n(q,t)$ encodes the dimensions of the bigraded pieces of these vector spaces by the formula
\[ GC^{(m)}_n(q,t) = \sum_{u,v}q^u t^v \dim H^0(\H^n_0,\Oo(m))_{u,v} \]

\begin{theorem}[\cite{GHai,Hai,Hai2,M}]\label{thm:geom}
For all $n,m\geq 1$, we have
\[C^{(m)}_n(q,t) = GC^{(m)}_n(q,t)\]
\end{theorem}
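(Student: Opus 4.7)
The plan is to bridge the combinatorial $C^{(m)}_n(q,t)$ and the geometric $GC^{(m)}_n(q,t)$ by routing through the \emph{algebraic} higher $q,t$-Catalan numbers $AC^{(m)}_n(q,t)$ of Garsia and Haiman \cite{GHai}. The algebraic version is defined as a symmetric-function expression (a bigraded Frobenius-character pairing involving an iterated operator acting on $e_n$, evaluated against the sign character), and being manifestly $(q,t)$-symmetric it mediates between the two visibly asymmetric definitions at hand.

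First I would invoke Haiman's work \cite{Hai, Hai2}. Via the isospectral Hilbert scheme and the Procesi bundle on $\H^n$, Haiman identifies the sections of $\Oo(m)$ on $\H^n_0$ with an explicit bigraded piece of a diagonal-coinvariant-type representation and matches the $(q,t)$-bigrading coming from the torus action with the $(q,t)$-grading on the symmetric-function side. This yields the equality $GC^{(m)}_n(q,t) = AC^{(m)}_n(q,t)$; the argument is purely geometric and representation-theoretic, with no combinatorial input from Dyck paths.

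Second, I would apply Mellit's theorem \cite{M} proving the compositional $(km,kn)$-shuffle conjecture of \cite{BGLX}. Specializing to $k=1$ and to the character producing the Catalan alternant, Mellit's formula evaluates $AC^{(m)}_n(q,t)$ as $\sum_{D \in \mathcal{D}^{(m)}_n} q^{\dinv_m(D)} t^{\area_m(D)}$, which is precisely Loehr's combinatorial formula (\ref{higherdef1}). Hence $C^{(m)}_n(q,t) = AC^{(m)}_n(q,t)$, and composing the two equalities delivers the theorem.

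The main obstacle here is not mathematical depth, since both component results are deep theorems already on record; rather, it is the bookkeeping required to reconcile conventions across \cite{GHai, Hai, Hai2, M}. One must carefully match the direction of the torus weights on $\Oo(m)$ with the $q$ versus $t$ conventions on the algebraic side, confirm that passing from $\H^n$ to the punctual locus $\H^n_0$ corresponds on the symmetric-function side to the correct specialization selecting only the degree-preserved sign-isotypic contributions, and verify that Loehr's statistics $\dinv_m$ and $\area_m$ coincide with the statistics that arise in the $k=1$ specialization of Mellit's formula. Each of these checks is routine but nontrivial, and each is documented (sometimes implicitly) in the cited references.
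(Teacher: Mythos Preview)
Your proposal is correct and matches the paper's own account essentially line for line: both route through the algebraic higher $q,t$-Catalan numbers (the paper names the intermediate object explicitly as $\langle \nabla^m e_n, e_n\rangle$, agreeing with the Garsia--Haiman rational-function sum), invoking Haiman \cite{Hai,Hai2} for the equality $GC^{(m)}_n = AC^{(m)}_n$ and Mellit \cite{M} for the equality $C^{(m)}_n = AC^{(m)}_n$. The paper treats this theorem as a citation rather than giving a self-contained proof, so your level of detail already exceeds what appears there.
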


The proof of this result passes through several other definitions of the higher $q,t$-Catalan numbers, which we briefly explain. We borrow notation and terminology from \cite{LLL}, which contains a more detailed summary of the known equivalences (at that time) between various definitions of higher $q,t$-Catalan numbers. \\

Haiman \cite{Hai,Hai2} showed that $GC^{(m)}_n(q,t)$ agrees with the original definition of the higher $q,t$-Catalan numbers \cite{GHai} as a sum of rational functions indexed by partitions. The original definition in \cite{GHai} was also known to agree with the polynomial $\langle \nabla^m e_n,e_n\rangle$, where $\nabla$ is a certain operator related to Macdonald polynomials. \\

In the case $m=1$, Garsia and Haglund \cite{GHag} showed that $\langle \nabla e_n,e_n\rangle$ agrees with the combinatorially defined polynomials $C_n(q,t)$. The result for $m>1$ is a special case of a much more general result proved by Mellit \cite{M}, known as the compositional $(km,kn)$-shuffle conjecture \cite{BGLX}. This result is a generalization of the (higher) shuffle conjecture formulated in \cite{HHLRU} which amounts to a combinatorial formula for the entire polynomial $\nabla^m e_n$. A consequence is that the combinatorially defined higher $q,t$-Catalan numbers, $C^{(m)}_n(q,t)$, agree with $\langle \nabla^m e_n,e_n\rangle$.

\subsection{Duistermaat-Heckman Measure}

Let $X$ be a $d$-dimensional variety with a line bundle $\mathcal L$. Suppose that $X$ is equipped with a $k$-dimensional torus action, and that the action extends compatibly to $\mathcal L$. Then there is a linear action of $T$ on the spaces of global sections of $\mathcal L^{\otimes m}$, which is equivalent to a $\Z^k$-grading on these spaces. For $\mathbf{v}\in\Z^k$ we denote the degree $\mathbf{v}$ component by $H^0(X,\mathcal{L}^{\otimes m})_{\mathbf{v}}$. The \textit{Duistermaat-Heckman measure} of the triple $(X,\mathcal L,T)$ is defined to be the weak limit of measures on $\R^k$,
\[ \mathrm{DH}(X,\mathcal L,T) = \lim_{m\to\infty} \left( \frac{1}{m^d} \sum_{\mathbf{v}\in \Z^k} \delta_{\frac{\mathbf{v}}{m}}\dim H^0(X,\mathcal L^{\otimes m})_{\mathbf{v}} \right). \]

Brion and Procesi \cite{BP} prove that this limit exists for all such triples $(X,\mathcal L,T)$. They also show that the resulting measure is piece-wise polynomial on $\R^k$.\\

In the present case, the relevant triple consists of the punctual Hilbert scheme with line bundle $\Oo(1)$ and two-dimensional torus $T$ acting as described in the previous section. With this setup, the Duistermaat-Heckman measure of the punctual Hilbert scheme is defined as the weak limit
\begin{equation}\label{def:DH} \mathrm{DH}(\H^n_0,\Oo(1),T) = \lim_{m\to\infty} \left( \frac1{m^{n-1}}\sum_{u,v}\delta_{\left(\frac{u}m,\frac{v}m\right)}\dim H^0(\H^n_0,\Oo(m))_{u,v}\right). \end{equation}

The equality $C^{(m)}_n(q,t)=GC^{(m)}_n(q,t)$ asserted in Theorem \ref{thm:geom} implies that the limit above coincides with the limit appearing in Theorem \ref{thm:main}. This allows for the following geometric reinterpretation of Theorem \ref{thm:main}.

\begin{theorem}\label{thm:DH}
For all $n\geq 1$, the $q,t$-Catalan measure $\mu_n$ is equal to the Duistermaat-Heckman measure $\mathrm{DH}(\H^n_0,\Oo(1),T)$.
\end{theorem}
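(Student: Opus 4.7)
The plan is essentially a matter of matching two encodings of the same polynomial $C_n^{(m)}(q,t) = GC_n^{(m)}(q,t)$ as discrete measures on $\Z^2$ and then passing to the limit using Theorem \ref{thm:main}. There is no genuinely new geometric input required beyond Theorem \ref{thm:geom}, which has already been quoted, so the proof is short and the only obstacle is bookkeeping.

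First, I would unpack the definition of $GC_n^{(m)}(q,t)$. By construction it is the character of $H^0(\H^n_0, \Oo(m))$ as a $T$-representation, so the correspondence between polynomials in $\N[q,t]$ and discrete measures on $\Z^2$ used in the introduction gives
\[ GC_n^{(m)}(q,t) \;\longleftrightarrow\; \sum_{u,v}\delta_{(u,v)}\dim H^0(\H^n_0,\Oo(m))_{u,v}. \]
On the other hand, the combinatorial dinv/area formula (\ref{higherdef1}) yields
\[ C^{(m)}_n(q,t) \;\longleftrightarrow\; \sum_{D\in\mathcal D^{(m)}_n}\delta_{(\dinv_m(D),\area_m(D))}. \]
Theorem \ref{thm:geom} then forces the equality of these two discrete measures on $\Z^2$ for every $m\geq 1$.

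Next, I would normalize both sides in exactly the manner used in Theorem \ref{thm:main} and in the definition (\ref{def:DH}): rescale the support by a factor of $1/m$, and rescale the total weight by $1/m^{n-1}$. Since the two discrete measures coincide at each $m$, their normalizations coincide, and hence their weak limits as $m\to\infty$ coincide whenever both exist. Brion and Procesi \cite{BP} guarantee the existence of the limit on the geometric side, which is by definition $\mathrm{DH}(\H^n_0,\Oo(1),T)$, while Theorem \ref{thm:main} identifies the limit on the combinatorial side as $\mu_n$.

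Putting these together gives $\mu_n = \mathrm{DH}(\H^n_0,\Oo(1),T)$, which is Theorem \ref{thm:DH}. The main (minor) obstacle is simply to state the correspondence between $\N[q,t]$ polynomials and discrete measures on $\Z^2$ with enough care that the reader sees the bigraded dimensions $\dim H^0(\H^n_0,\Oo(m))_{u,v}$ play the role of multiplicities of Dirac masses, matching the coefficient-of-$q^u t^v$ viewpoint used throughout the paper. Once this identification is in place, the theorem reduces to Theorem \ref{thm:main} combined with Theorem \ref{thm:geom}, and no further analysis is needed.
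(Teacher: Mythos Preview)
Your proposal is correct and matches the paper's own argument almost verbatim: the paper simply observes that Theorem~\ref{thm:geom} makes the discrete measures in (\ref{def:DH}) coincide termwise with those in Theorem~\ref{thm:main}, so the two limits agree. Your only addition is invoking Brion--Procesi for existence on the geometric side, which is harmless but unnecessary here since the two sequences of measures are literally equal for each $m$.
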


There are geometric formulas for computing Duistermaat-Heckman measures. Example 1.2.6 in \cite{K} describes the geometric setup for another approach to these measures for $\H^n_0$ in the case $n=4$, as well as an alternate interpretation of the figure in Example \ref{ex:4measure}. To our knowledge though, Theorem \ref{thm:DH} is the first combinatorial formula for the Duistermaat-Heckman measure of $\H^n_0$.

\pagebreak


\begin{thebibliography}{9}

\bibitem{BGLX}
F. Bergeron, A. Garsia, E. Leven, G. Xin,
\textit{Compositional (km, kn)-Shuffle Conjectures},
Int. Math. Res. Not., Vol. 2016, Issue 14 (2016), pp. 4229–4270

\bibitem{BP}
M. Brion, C. Procesi
\textit{Action d’un tore dans une vari\'et\'e projective},
Operator algebras, Unitary Representations, Enveloping Algebras, and Invariant Theory, Progress in Mathematics Vol. 92, (1990) Birkh\"auser, pp. 509–539.

\bibitem{GHag}
A. Garsia and J. Haglund,
\textit{A proof of the q, t-Catalan positivity conjecture},
Discrete Math. 256 (2002), pp. 677–717.

\bibitem{GHai}
A. Garsia, M. Haiman,
\textit{A remarkable $q,t$-Catalan sequence and $q$-Lagrange inversion},
J. Algebraic Combin. 5 (1996), no. 3, pp. 191-244.

\bibitem{Hag}
J. Haglund, \textit{The q,t-Catalan numbers and the space of diagonal harmonics, with an appendix on the combinatorics of Macdonald polynomials},
AMS University Lecture Series, 2008.

\bibitem{HHLRU}
J. Haglund, M. Haiman, N. Loehr, J. B. Remmel, A. Ulyanov,
\textit{A combinatorial formula for the character of the diagonal coinvariants},
Duke Math. J. 126, 2 (2003), pp. 195-232.

\bibitem{Hai}
M. Haiman,
\textit{t,q-Catalan numbers and the Hilbert scheme},
Discrete Math., Vol. 193, no. 1 (1998), pp. 201-224.

\bibitem{Hai2}
M. Haiman,
\textit{Vanishing theorems and character formulas for the Hilbert scheme of points in the plane},
Invent. Math. 149 (2002), pp. 371–407.

\bibitem{K}
A. Knutson,
\textit{A compactly supported formula for equivariant localization, and, simplicial complexes of Bialynicki-Birula decompositions},
Pure Appl. Math. Q., Vol. 6, no. 2 (2010), pp. 501–544.

\bibitem{LLL}
K. Lee, L. Li, N. Loehr,
\textit{Limits of modified higher $q,t$-Catalan numbers},
Electron. J. Combin. 20(3) (2013), P4, (electronic).

\bibitem{L} 
N. Loehr,
\textit{Conjectured statistics for the higher $q,t$-Catalan sequences}, 
Electron. J. Combin. 12 (2005), R9, (electronic)

\bibitem{M}
A. Mellit,
\textit{Toric braids and (m,n)-parking functions}, Duke Math. J. 170, 18 (2021), pp. 4123-4169

\end{thebibliography}
\end{document}